\newtheorem{theorem}{Theorem}[section]
\newtheorem*{theorem*}{Theorem}
\newtheorem{lemma}[theorem]{Lemma}
\newtheorem{proposition}{Proposition}
\theoremstyle{definition}
\newtheorem{remark}{Remark}
\newtheorem{example}{Example}
\newcommand{{\cstm}}{{\textit{$\phi$-cstm}}}
\newcommand{{\ecstm}}{{\textit{e$\phi$-cstm}}}
\def\CA{\mathcal{A}}
\def\CB{\mathcal{B}}
\def\CC{\mathcal{C}}
\def\CD{\mathcal{D}}
\def\CP{\mathcal{P}}
\def\CL{\mathcal{L}}
\def\CT{\mathcal{T}}
\def\wt{\widetilde}
\def\wh{\widehat}
\def\8{\infty}
\def\ninf{{n\to\infty}}
\def\pardef{:=}
\def\ul{\underline}
\def\ol{\overline}
\def\S{\Sigma}
\def\s{\sigma}
\def\BBone{\mathbf{1}}
\def\R{\mathbb{R}}
\def\Z{\mathbb{Z}}
\def\disp{\displaystyle}
\def\eps{\varepsilon}
\def\llb{[}
\def\rrb{]}
\newcommand{\pA}{{\partial A}}
\newcommand{\pinf}{{+\8}}
\def\Psiin{\Psi_{\mathrm{in}}}
\def\Psiout{\Psi_{\mathrm{out}}}
\def\alphain{\alpha_{\mathrm{in}}}
\def\Kac{Ka$\mathrm{\check{c}}\ $}        %%%%%%%%%%%%%%% KAC avec la chech !
\title[Large deviations for return times]
      {Large deviations for return times in non-rectangle sets for axiom A diffeomorphisms}
\author[Renaud Leplaideur and Beno\^{\i}t Saussol]{}
\date{December 2007}
\subjclass[2000]{Primary: 37B20, 37C45, 37D20, 37D35, 60F10}
 \keywords{large deviations, return times, thermodynamic formalism}
 \email{renaud.leplaideur@univ-brest.fr}
 \email{benoit.saussol@univ-brest.fr}
\begin{document}
\maketitle

%% Enter the first author's name and address:
\centerline{\scshape Renaud Leplaideur }

\medskip

\centerline{\scshape Beno\^\i t Saussol }
\medskip
{\footnotesize
 %% please put the address of the second author
 \centerline{Laboratoire de Math\'ematiques, CNRS UMR 6205,  Universit\'e de Bretagne Occidentale, 6 av. Victor Le Gorgeu }
   \centerline{CS 93837, 29238 BREST Cedex 3, FRANCE}
} %

\bigskip

%% The name of the associate editor will be entered by an editorial staff
 \centerline{(Communicated by the associate editor name)}

\begin{abstract}
For Axiom A diffeomorphisms and equilibrium states, we prove a Large deviations result for the sequence of successive return times into a fixed Borel set, under some assumption on the boundary.
Our result relies on and extends the work by Chazottes and Leplaideur who considered cylinder sets of a Markov partition. 
\end{abstract}

\section{Introduction}

Recall that for any given measurable and ergodic dynamical system $(X,T,\mu)$, and for any set $A$ with positive $\mu$-measure, \Kac's lemma together with birkhoff ergodic theorem implies that the sequence $r_A^n$ of $n$th return-times into $A$ by iterations of the map $T$ satisfies 
$$\lim_\ninf \frac{r^n_A(x)}{n}=\frac{1}{\mu(A)}
\quad\text{for $\mu$-a.e. $x$}.
$$

We are interested in fluctuations of order $n$ of $r_A^n$ around $\frac{n}{\mu(A)}$, so we want to prove a Large Deviations Principle (LDP for short), that is, we want to show the existence of the \emph{rate function $\Phi_A$} such that for every $u\ge \frac{1}{\mu(A)}$,
$$\lim_{n\to \infty}\frac1n\log\mu\left\{\frac{r^n_A}n\geq u\right\}=\Phi_A(u)
$$
and for every $\disp 0\leq u\le\frac{1}{\mu(A)}$,
$$\lim_{n\to \infty}\frac1n\log\mu\left\{\frac{r_A^n}n\leq
u\right\}= \Phi_A(u).
$$
If this holds, we will say that the sequence of return-times into the set $A$ satisfies the LDP for the measure $\mu$. 
If the above LDP only holds for $u\subset ]\ul u,\ol u[$  with $\ul u<\frac{1}{\mu(A)}<\ol u$, we will say that the sequence of return-times into $A$ satisfies a LDP for the measure $\mu$ \emph{near the average}.

A standard method to get a LDP is to prove the existence of the scaled-\emph{cumulant generating function $\Psi_A$}, defined by the following limit
\[
\Psi_A(\alpha)=\lim_{n\to\8} \frac1n \log \int e^{\alpha r_A^n}d\mu,
\]
and prove that it is differentiable. In this case it is well known that the rate function exists and these two
 functions form a Legendre transform pair, namely
\begin{equation}\label{equ-psidonnephi}
\Phi_A(u)=\inf_{\alpha}\left\{ \Psi_A(\alpha)-\alpha u\right\}.
\end{equation}
This is the approach that we are adopting in this paper, with the difference that we do not prove the differentiability of the function $\Psi_A$. It is noteworthy that with our monotone approximation method
this assumption is not required (see Proposition~\ref{pro-inegalout} for details). 

Our result applies to Axiom A diffeomorphism and equilibrium state of H\"older potential, 
and for the sequence of successive return times into a Borel set $A$ that satisfies a condition 
about the smallness of its (non-Markovian) boundary. Taking a Markov partition and the corresponding semi-conjugacy, we will state the result for subshifts of finite type, keeping in mind that this really corresponds to a result for Borel sets on the manifold.

%%%%%%%%%%%%%%%%%%%%%%%%%%%%%%%%%%%%%%%%%%%%%%%%%%%%%%%%%%%%%%%%%%%%%%%%

\section{Statements}

Throughout, $(\Sigma,\sigma)$ will denote a topologically mixing subshift of
 finite type.  The set of vertices of the defining graph of $(\Sigma,\sigma)$ is
 $\{1,\ldots,N\}$ with $N\geq 2$. We denote by 
$\CA=(a_{ij})$ the $N\!\times\! N$-transition (irreducible and aperiodic) matrix associated to $\Sigma$;
namely  points in $\Sigma$ are sequences ${x}=\{x_n\}_{n\in\Z}$ such that
for every $n$, $x_n$ belongs to $\{1,\ldots,N\}$ and
$$a_{x_n x_{n+1}}=1.$$ 
Recall that if $f$ is an Axiom A diffeomorphism of a compact manifold $M$ then there always exists a subshift of finite type $\Sigma$ and a coding map $\pi\colon \Sigma\to M$ such that $\pi\circ\sigma=f\circ\pi$.
 
Let $\phi:\S\rightarrow\R$  be $\alpha$-H\"older continuous. For a given $\s$-invariant measure $\lambda$, the $\phi$-pressure  is the quantity  $P_\lambda(\phi):=h_\lambda(\s)+\int\phi\,d\lambda$; $P_\lambda(\phi)$ will also be called the $\lambda$-pressure of $\phi$. The unique \emph{equilibrium state} for $\phi$, {\it i.e.} the unique $\s$-invariant probability measure with maximal $\phi$-pressure, will be denoted by $\mu_\phi$. Its pressure is the topological $\phi$-pressure.
 
For a set $A\subset \S$ and an integer $n$, we denote by $\partial A$ its topological boundary $\ol A\cap \ol{\S\setminus A}$. 

Note that $\pA$ can be empty; this holds for example when $A$ is a finite union of cylinders. We let $\wt\CP_\phi(\pA)$ be the $\phi$ pressure of $\pA$ ; since $\pA$ may not be invariant we define it according to the variational principle:
\[
\wt\CP_\phi(\pA)=\sup \left\{ h_\nu(\s)+\int \phi d\nu\colon \nu\text{ ergodic and }\nu(\pA)>0\right\}
\]
Note that this does not correspond to the dimension-like definition of the pressure introduced by Pesin and Pitskel~\cite{pesin-pitskel}.

If $D$ is any subset in $\S$, and for $x\in \S$, we denote by $r_D(x)$ the first hitting in $D$ by iterations of $\s$ (if it exists). Namely $r_D(x)$ is the smallest integer $n>1$ such that $\s^n(x)$ belongs to $D$, and $r_D(x)=+\8$ if no such integer exists. We also set $r^1_D(x)=r_D(x)$, and denote the $n$th return time $r^n_D(x)$ the cocycle defined by 
$$r^{n+1}_D(x)=r^n_D(x)+r_D(\s^{r^n_D(x)}(x)).$$

Let $\max(A):=\sup_\mu \mu(A)$ and $\min(A)=\inf_\mu\mu(A)$, where the extrema are taken among all invariant measures $\mu$. Then our result is: 

\begin{theorem*}
Let $\phi:\S\rightarrow\R$ be any H\"older continuous function.
Let $A\subset \S$ be a Borel set.  We have:
\begin{enumerate}
\item\label{point1} if for any $\s$-invariant measure $\mu$, $\mu(\pA)=0$, then the sequence $(r^n_A)_{n\geq 1}$ satisfies the Large Deviations Principle for $\mu_\phi$, except possibly for discontinuity points. More precisely, for any $u\in\left(\frac{1}{\max(A)},\frac{1}{\min(A)}\right)$ the rate function $\Phi_A(u)$ exists and is finite, and for any $u$ outside the interval $[\frac{1}{\max(A)},\frac{1}{\min(A)}]$, the rate function $\Phi_A(u)$ exists is equal to $-\infty$.

\item\label{point2} if the $\phi$-pressure $\wt\CP_\phi(\pA)$ of the boundary is strictly smaller than the $\phi$-topological pressure then the sequence $(r^n_A)_{n\geq 1}$ satisfies a  Large Deviations Principle for $\mu_\phi$ \emph{near the average}.
\end{enumerate}
\end{theorem*}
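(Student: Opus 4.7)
The guiding idea is to bracket the Borel set $A$ between two sequences of finite unions of cylinders, to which the Chazottes--Leplaideur LDP already applies, and then to pass to the limit. For each level $k$, let
\[
A_k^{\mathrm{in}}=\bigcup\{C\colon C\text{ is a }k\text{-cylinder with }C\subset A\},
\qquad
A_k^{\mathrm{out}}=\bigcup\{C\colon C\text{ is a }k\text{-cylinder with }C\cap A\neq\emptyset\}.
\]
These are finite unions of cylinders, they satisfy $A_k^{\mathrm{in}}\subset A\subset A_k^{\mathrm{out}}$ for every $k$, and they decrease/increase monotonically with $\bigcap_k A_k^{\mathrm{out}}\setminus\bigcup_k A_k^{\mathrm{in}}\subset\pA$. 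Under either hypothesis $\mu_\phi(\pA)=0$: under hypothesis \eqref{point1} this is immediate, and under hypothesis \eqref{point2} it holds because $\mu_\phi(\pA)>0$ would force the ergodic decomposition of $\mu_\phi|_{\pA}$ to contain a component realizing $\wt\CP_\phi(\pA)=P_{\mathrm{top}}(\phi)$, contradicting the strict inequality.

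\textbf{Step 1: Sandwich via monotonicity of return times.} Since $A_k^{\mathrm{in}}\subset A\subset A_k^{\mathrm{out}}$ implies the pointwise inequality $r^n_{A_k^{\mathrm{out}}}\leq r^n_A\leq r^n_{A_k^{\mathrm{in}}}$ for every $n$, the events $\{r^n_A/n\geq u\}$ and $\{r^n_A/n\leq u\}$ are sandwiched between the analogous events for the two cylinder approximations. Applying the Chazottes--Leplaideur result to $A_k^{\mathrm{in}}$ and $A_k^{\mathrm{out}}$ yields rate functions $\Phi_{A_k^{\mathrm{in}}}$ and $\Phi_{A_k^{\mathrm{out}}}$, and monotonicity gives
\[
\Phi_{A_k^{\mathrm{out}}}(u)\leq \liminf_n\tfrac1n\log\mu_\phi\{r^n_A/n\geq u\}\leq\limsup_n\tfrac1n\log\mu_\phi\{r^n_A/n\geq u\}\leq\Phi_{A_k^{\mathrm{in}}}(u),
\]
with the symmetric statement for $\{r^n_A/n\leq u\}$. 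The LDP for $A$ amounts to showing that the two bounds agree in the limit $k\to\infty$.

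\textbf{Step 2: Matching the inner and outer limits.} I would first lift the Chazottes--Leplaideur construction to express $\Psi_{A_k^{\mathrm{in}}}$ and $\Psi_{A_k^{\mathrm{out}}}$ through variational formulas involving the pressure of $\phi$ restricted to orbits that see the cylinder union the prescribed number of times. As $k\to\infty$, the measures $\mu_\phi(A_k^{\mathrm{out}}\setminus A_k^{\mathrm{in}})\to 0$, and the same holds for every invariant measure under hypothesis \eqref{point1}; this should ensure continuity of the variational quantities in $k$. The cumulant generating functions $\Psi_{A_k^{\mathrm{in}}}(\alpha)$ and $\Psi_{A_k^{\mathrm{out}}}(\alpha)$ are thus expected to converge to a common limit $\Psi_A(\alpha)$, and taking Legendre transforms through this limit---precisely the content of the monotone approximation scheme invoked as Proposition~\ref{pro-inegalout}---delivers the rate function $\Phi_A$ without needing its differentiability. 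The endpoint identification $1/\max(A)$ and $1/\min(A)$ comes from the elementary bounds $\mu(A_k^{\mathrm{in}})\nearrow\max(A)$ and $\mu(A_k^{\mathrm{out}})\searrow\min(A)$ along extremal invariant measures, which control the range where $\Phi_A$ is finite.

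\textbf{Main obstacle.} The delicate step is Step 2, and in particular the outer approximation. The naive bound $r^n_{A_k^{\mathrm{out}}}\leq r^n_A$ is good, but to prove that $\Psi_{A_k^{\mathrm{out}}}(\alpha)\to\Psi_A(\alpha)$ one must control orbits that hit $A_k^{\mathrm{out}}\setminus A$ a positive fraction of the time, which artificially shortens the return time; the entropy/pressure cost of such orbits is exactly governed by $\wt\CP_\phi(\pA)$. Under hypothesis \eqref{point1} these configurations are absent in the strong sense that no invariant measure charges them, which yields LDP on the full interval except possibly at discontinuities of $\Phi_A$. Under hypothesis \eqref{point2}, the pressure gap $P_{\mathrm{top}}(\phi)-\wt\CP_\phi(\pA)>0$ gives only a quantitative exponential suppression, which succeeds as long as $u$ stays in a neighborhood of $1/\mu_\phi(A)$ whose size is dictated by this gap---hence the restriction to deviations \emph{near the average}.
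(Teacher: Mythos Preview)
Your overall architecture matches the paper's: approximate $A$ from inside and outside by unions of cylinders $\CB_m\subset A\subset\CC_m$, use monotonicity of return times to sandwich the cumulant generating functions, define $\Psiin=\lim_m\Psi_{\CB_m}$ and $\Psiout=\lim_m\Psi_{\CC_m}$, and invoke Proposition~\ref{pro-inegalout} once $\Psiin=\Psiout$ is established. You also correctly single out the obstacle: orbits visiting $\CD_m=\CC_m\setminus\CB_m$ a positive fraction of the time.

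However, the heart of the matter---showing $\Psiin=\Psiout$---is not proved in your proposal; ``should ensure continuity of the variational quantities'' is not an argument, and the paper needs two quite specific devices here. For statement~\ref{point2} it uses the characterization of $\Psi_{\CB_m}(\alpha)$ via the equilibrium state $m_{\CB_m,\alpha}$ for $\phi-\Psi_{\CB_m}(\alpha)\BBone_{\CB_m}$ (Lemma~\ref{lem-carac-mus}), which after inducing on $\CC_m$ gives the comparison
\[
\frac{m_{\CB_m,\alpha}(\CB_m)}{m_{\CB_m,\alpha}(\CC_m)}\,\Psi_{\CB_m}(\alpha)\le \Psi_{\CC_m}(\alpha);
\]
the pressure hypothesis on $\pA$ then enters through weak accumulation points of $(m_{\CB_m,\alpha})$ to force $m_{\CB_m,\alpha}(\CD_m)\to0$ (Lemmas~\ref{lem-meslimcposi}--\ref{lem-mesbordnul2}). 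For statement~\ref{point1} the paper gives instead a dynamical argument: the hypothesis implies $\max(\CD_m)\to0$ (Lemma~\ref{lem-lim-mesmax-D}), hence $\Phi_{\CD_m}(v)=-\infty$ for large $m$ (Proposition~\ref{pro-phid-infini}), and this, combined with a mass-concentration estimate (Proposition~\ref{prop-concen-mass}), kills the contribution of orbits with $r^{n\eps}_{\CD_m}\le n\tau$ in a direct splitting of the integral $\int e^{\alpha r^n_{\CB_m}}d\mu_\phi$.

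There is also a structural ingredient you miss for statement~\ref{point1}. Equality $\Psiin=\Psiout$ is only obtained on $(-\infty,\alphain)$ with $\alphain$ possibly finite, so Proposition~\ref{pro-inegalout} alone does not reach the right endpoint $1/\min(A)$. The paper closes this gap by a symmetry trick (Proposition~\ref{pro-psihalf}): since $\pA=\partial(A^c)$, one applies the half-line LDP to $A^c$ and translates it, by an elementary counting of visits, into the LDP for $A$ on the right of $1/\mu_\phi(A)$. Your endpoint remark about $\mu(A_k^{\mathrm{in}})\nearrow\max(A)$ is not the correct mechanism; the endpoints arise as the slopes of $\Psi_A$ at $\pm\infty$, and reaching $1/\min(A)$ genuinely requires the $A\leftrightarrow A^c$ duality.
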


We recall that our method is based on the existence of the cumulant generating function
$\Psi_A(\alpha)$ for every $\alpha$ in some open interval $(\ul\alpha,\ol\alpha)$. For the statement~\ref{point1}, we will prove that the function $\Psi_A$ is defined on an interval $(-\8,\ol\alpha)$.
For the statement~\ref{point2}, we will only get the existence of $\Psi_A$ on some open neighborhood $(\ul\alpha,\ol\alpha)$ of $0$.

We emphasize that if $f$ is an axiom A diffeomorphism of a manifold $M$ and $\mu_\phi$ is an equilibrium state of a H\"older potential $\phi$, and $V$ is a Borel set then the theorem applies to $A=\pi^{-1}V$ since $\partial\pi^{-1}V\subset\pi^{-1}\partial V$ under the same hypotheses on $\partial V$.
The hypothesis in statement~\ref{point1} could seem very restrictive;
however it should be satisfied quite often in some general situations, as the following example suggests.
\begin{example}
Let $(M,f)$ be an hyperbolic automorphism of the 2-torus, and consider the family of balls $B(x,r)$ about a given point $x\in M$. Then, for all but countably many radii $r>0$, the condition $\mu(\partial B(x,r))=0$ for every invariant measure $\mu$ is satisfied.
\end{example}
\begin{proof}
Let $S$ be the boundary of a ball. Using hyperbolicity one can show that the intersection of $S$ with its images $f^n(S)$ consists, at most, of countably many points. 
Hence the set of recurrent points $R(S)$ in $S$ is at most countable. 
If an invariant measure gives weight to $S$, by Poincar\'e Recurrence Theorem it implies that it gives weight to $R(S)$ which is a countable set. Thus the measure must have an atomic part consisting of a periodic orbit. Hence $S$ must contain a periodic point.
Since the set of periodic points of such a map is countable there can be at most countably many boundaries $\partial B(x,r)$ carrying a periodic point as $r$ varies, which proves the proposition.
\end{proof}

The hypothesis in statement~\ref{point2} about the pressure of the boundary appears quite naturally in the thermodynamic formalism of dynamical systems with singularities. In the case of $\phi=0$ it simply says that the boundary $\pA$ does not carry full measure theoretical entropy. 
A more explicit condition can be given on the manifold itself.
\begin{proposition}\label{pro-bdpress-mfd}
Let $f$ be an axiom A diffeomorphism of a manifold $M$ and let $\mu_\phi$ be an equilibrium state of a H\"older potential $\phi$. Let $V$ be a Borel set and denote by $U_\eps(\partial V)$
the $\eps$-neighborhood of the boundary $\partial V$. 
Assume that there exist some constants $c>0$ and $a>0$ such that 
\[
\mu_\phi(U_\eps(\partial V)) \le c \eps^a \quad \forall \eps>0.
\]
Then, $\wt\CP_\phi(\partial V)<\CP_\phi(M)$. In particular the sequence of return times into $V$ satisfies a Large Deviations Principle near the average.
\end{proposition}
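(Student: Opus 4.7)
By the Main Theorem, it suffices to show $\wt\CP_\phi(\partial V)<\CP_\phi(M)$, and by the semi-conjugacy $\pi\colon\Sigma\to M$ (using $\partial(\pi^{-1}V)\subset\pi^{-1}(\partial V)$ and the preservation of entropy and integrals of $\phi$ under $\pi$ on ergodic measures) it is enough to establish the inequality on $M$ itself. So let $\nu$ be any $f$-ergodic measure with $\nu(\partial V)>0$.

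\emph{Step 1: pass to the Pesin--Pitskel topological pressure of $\partial V$.} Since $\partial V$ is closed and $\nu(\partial V)>0$, Poincaré recurrence shows that $\nu$ is supported on the fully $f$-invariant set $F^{*}:=\{x\in M:f^{n}x\in\partial V\text{ i.o.}\}\subset\bigcup_{n\ge 0}f^{-n}(\partial V)$. Denote by $P^{PP}_\phi$ the Pesin--Pitskel topological pressure. Its variational principle, together with countable sub-additivity and the $f$-invariance $P^{PP}_\phi(f^{-n}Z)=P^{PP}_\phi(Z)$, yields
\[
h_\nu(f)+\int\phi\,d\nu \le P^{PP}_\phi(F^{*}) \le P^{PP}_\phi\bigl(\textstyle\bigcup_n f^{-n}\partial V\bigr)=P^{PP}_\phi(\partial V).
\]

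\emph{Step 2: bound $P^{PP}_\phi(\partial V)$ using the Gibbs property.} For axiom A with a H\"older potential, $\mu_\phi$ is a Gibbs measure, so that $\mu_\phi(B_n(x,\eps_0))\asymp e^{S_n\phi(x)-n\CP_\phi(M)}$ uniformly in $x$ and $n$ at any fixed small scale $\eps_0$. Let $\lambda>0$ be the minimum expansion rate of $f$ on its non-wandering set. Using the local product structure, one covers $U_\eps(\partial V)$ by an essentially disjoint family of dynamical rectangles of scale $(\eps_0,\eps)$ --- equivalent to Bowen balls of depth $n\asymp|\log\eps|/\lambda$ --- with centers $x_i\in\partial V$. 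Summing the Gibbs estimates and invoking the hypothesis,
\[
c\eps^a \ge \mu_\phi(U_\eps\partial V) \gtrsim e^{-n\CP_\phi(M)}\sum_{i}e^{S_n\phi(x_i)}.
\]
Taking $\frac1n\log$ and letting $\eps\downarrow 0$, the right-hand sum is a cover sum computing the Pesin--Pitskel pressure of $\phi$ on $\partial V$, and we obtain $P^{PP}_\phi(\partial V)\le\CP_\phi(M)-a\lambda$. Combined with Step 1,
\[
\wt\CP_\phi(\partial V) \le \CP_\phi(M)-a\lambda < \CP_\phi(M),
\]
and the Large Deviations Principle near the average follows from point~\ref{point2} of the Main Theorem.

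The main technical difficulty is in the covering argument of Step 2: a naive cover by Bowen balls at scale $\eps_0$ has metric diameter $\eps_0$ rather than $\eps$, so to convert the measure decay $\mu_\phi(U_\eps\partial V)\le c\eps^a$ into a sum of Gibbs weights at depth $n\asymp|\log\eps|/\lambda$ one must use ``thin'' rectangles built from the local stable and unstable foliations and carefully bound the multiplicity of the cover. The Pesin--Pitskel variational principle for the $G_\delta$ set $F^{*}$ in Step 1 is standard but requires an analogous exhaustion by compact invariant subsets to be fully justified.
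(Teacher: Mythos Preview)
Your route through the Pesin--Pitskel pressure has a genuine gap in Step~2, and the paper in fact explicitly avoids this detour for precisely the reason you identify but do not resolve. The Pesin--Pitskel pressure $P^{PP}_\phi(\partial V)$ is computed with \emph{forward} Bowen balls $B_n(x,\eps_0)$; for an Axiom~A diffeomorphism such a ball has shrunk to size $\asymp e^{-\lambda n}$ only in the unstable direction, while it remains of size $\eps_0$ in the stable direction. Hence it is \emph{not} contained in $U_\eps(\partial V)$ when $\eps\asymp e^{-\lambda n}$, and your displayed inequality $\mu_\phi(U_\eps\partial V)\gtrsim e^{-n\CP_\phi(M)}\sum_i e^{S_n\phi(x_i)}$ is unjustified. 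Replacing forward Bowen balls by the ``thin rectangles'' (two-sided Bowen balls) you allude to does make the balls fit inside $U_\eps(\partial V)$, but then the resulting weighted sum no longer computes the one-sided Pesin--Pitskel pressure, so your Step~1 inequality $h_\nu+\int\phi\,d\nu\le P^{PP}_\phi(\partial V)$ is no longer connected to what you have bounded. The paper remarks (just before Proposition~\ref{pro-bdpress}) that the forward dimension-like definition ``would satisfy us in the case of expanding maps, but for diffeomorphisms it would lead to a condition much too strong.''

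The paper's argument bypasses Pesin--Pitskel altogether. One passes to the subshift via the coding map $\pi$, so that the metric neighborhood $U_\eps(\partial V)$ pulls back inside the two-sided cylindrical neighborhood $V_n(\pA)$ with $n\asymp|\log\eps|$ (by uniform exponential shrinking of the diameter of $(-n,n)$-cylinders under $\pi$). The Gibbs property then reads $\mu_\phi(C_{-n,n}(x))\asymp e^{\wt S_n\phi(x)-2n\CP_\phi(\Sigma)}$ with the \emph{two-sided} Birkhoff sum $\wt S_n\phi$. For any ergodic $\nu$ with $\nu(\pA)>0$ one simply writes
\[
c\ge e^{\theta n}\mu_\phi(V_n(\pA))\ge\int_{\pA}\exp\Bigl(2n\bigl[\tfrac{\theta}{2}-\CP_\phi(\Sigma)+\tfrac{1}{2n}\wt S_n\phi(x)-\tfrac{1}{2n}\log\nu(C_{-n,n}(x))\bigr]\Bigr)d\nu(x),
\]
and then applies the two-sided Shannon--McMillan--Breiman and ergodic theorems together with Fatou's lemma to force $\tfrac{\theta}{2}-\CP_\phi(\Sigma)+\int\phi\,d\nu+h_\nu\le 0$. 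This gives the bound on $\wt\CP_\phi(\pA)$ directly, with no covering argument and no appeal to any dimension-like pressure.
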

See Section~\ref{sec-proof-th2} for further details.
In particular we have:
\begin{example}
Let $(M,f)$ be a $C^2$ \emph{volume preserving} Anosov diffeomorphism and let $V\subset M$ be an Borel set with piecewise $C^1$ boundary. Then the sequence of return times into $V$ satisfies a Large Deviations Principle near the average.
\end{example}
\begin{proof}
Set $\phi=-\log |D_x^uf|$. The equilibrium measure $\mu_\phi$ is the SBR measure which is here the volume measure, and the assumption in Proposition~\ref{pro-bdpress-mfd} is clearly satisfied.
\end{proof}

\medskip
\textit{Outline of the proof of the theorem: } in Section \ref{sec-CB-CC} we recall how the LDP was obtained for the return-times in cylinders. 
In Section~\ref{sec-exis-psiA} we compare the cumulant generating functions of inner and outer approximation of our set $A$ by union of cylinders. 
In section~\ref{sec-proof-th2}, under the assumption of the statement~\ref{point2} of the theorem,  we prove the existence of the cumulant generating function $\Psi_A$ on some interval. 
In section~\ref{sec-proof-th1} we give a dynamical proof of the first statement of the theorem.

%%%%%%%%%%%%%%%%%%%%%%%%%%%%%%%%
%%%%%%%%%%%%%%%%%%%%%%%%%%%%%%%%
\section{Large deviations for return time in cylinders}\label{sec-CB-CC}

We first recall the local thermodynamic formalism introduced in \cite{leplaideur1}. Then we recall how the large deviations principle for union of cylinders was obtained in \cite{chazottes-leplaideur}. Finally, we derive a uniform mass concentration principle. 

%%%%%%%%%%%%%%%%%%%%%%%

\subsection{Induced systems and local thermodynamic formalism}\label{subsec-thermo-loc}

For a given point $x=(x_n)_{n\in\Z}\in \S$, the past (resp. future) of the point denotes the backward (resp. forward) sequence  $(x_n)_{n\leq 0}$ (resp. $(x_n)_{n\geq 0}$).  
For $x$ and $y$ in $\S$, when $x_0=y_0$, the point $z\disp\pardef\llb x,y\rrb$ is the point obtained when we take the past of $y$ and the future of $x$. 

In $\Sigma$, the {\it cylinder} $[i_k,\ldots,i_{k+n}]$
will denote the set of points $x\in\Sigma$ such that $x_j=i_j$ (for every $k\leq
j\leq k+n$). Such a cylinder will also be called a word (of length
$n+1$) or equivalently a $(k,k+n)$-cylinder. 
If $x$ is in $\Sigma$, $C_{k,k+n}(x)$ will denote the
cylinder $[i_k,\ldots,i_{k+n}]$ such that $x_j=i_j$ (for every $k\leq
j\leq k+n$). By extension, $C_{-\8,n}(x)$ will denotes the set of points $(y_k)$ such that $y_k=x_k$ for every $k\leq n$; similarly $C_{n,+\8}(x)$ will denotes the set of points $(y_k)$ such that $y_k=x_k$ for every $k\geq n$. By definition, the local unstable leaf $W^u_{loc}(x)$ is $C_{-\8,0}(x)$, and the local stable leaf $W^s_{loc}(x)$ is $C_{0,+\8}(x)$. For $n\geq 0$, a $n$-cylinder will denote a $(-n,n)$-cylinder. 
The letter $R=\cup R_i$ denotes some finite union of $(-n,n)$-cylinders; in each of these cylinders we fix some local unstable leaf $F_i$. There is a natural projection from each $R_i$ onto each $F_i$ defined by  $\pi_{F_i}(z)=\llb z,x\rrb$, where $x$ is any point in $F_i$. For convenience we denote by $\pi_F$ the map defined on $R$ by 
$$\pi_F(z)=\pi_{F_i}(z)\iff z\in R_i.$$
We  denote by $g$ the first return map in $R$, and by $g_F$ the map $\pi_F\circ g$.  We thus have $g(x)=\s^{r_R(x)}(x)$. Note that if the maps $r_R$, $g$ and $g_F$ are not defined everywhere in $R$, the inverse branches of $g_F$ are well defined in the whole $F$. 

We can thus define the Ruelle-Perron-Frobenius operator for $g_F$: for $x$ in $F$,  we set 
$$\CL_S(\CT)(x)=\sum_{y,\  g_F(y)=x}e^{S_{r_R(y)}(\phi)(y)-r_R(y) S}\CT(y),$$
where $\CT:F\rightarrow\R$ is a continuous function, and $S$ is a real parameter.  As usual, $S_n(\phi)(x)$ denotes the Birkhoff sum $\phi(x)+\cdots+\phi\circ\s^{n-1}(x)$.

There exists some critical $S_c$, such that for every $S>S_c$ all the following holds: $\CL_S$ admits a unique and single dominating eigenvalue $\lambda_S$ in the set of $\alpha$-H\"older continuous functions. 
The adjoint operator $\CL_S^*$ has also $\lambda_S$ for unique and single dominating eigenvalue;  we denote by $\nu_S$ the unique probability measure on $F$ such that $\CL_S^*(\nu_S)=\lambda_S \nu_S$. We denote by $H_S$, the unique $\alpha$-H\"older continuous and positive function on $F$ satisfying $\CL_S(H_S)=\lambda_S H_S$ and $\int H_S\,d\nu_S=1$.
We also denote by $\mu_S$ the measure $H_S \nu_S$, and by $\wh\mu_S$ the natural extension of $\mu_S$. We recall that $\mu_S$ is a $g_F$-invariant probability measure, and $\wh\mu_S$ is a $g$-invariant probability measure. At last, we denote by $m_S$ the opened-out measure: namely $m_S$ is the $\s$-invariant measure satisfying, $m_S(R)>0$, and $\wh\mu_S$ is the conditional measure $m_S(\cdot|R)$. 

The spectral properties of $\CL_S$ yield the existence of  positive real constants $C_\phi$ and $\eps_S$,  such that for every  H\"older continuous $\CT:F\rightarrow\R$, for every integer $n\geq 1$ and for every $x$ in $F$
\begin{equation}\label{eq1-trouspect-perron}
\CL_S^n(\CT)(x)=e^{n\log\lambda_S}\int\CT\,d\nu_SH_S(x)+O(e^{n(\log\lambda_S-\eps_S)})||\CT||_\8.
\end{equation}
Note that $H_S$ is a positive function on the compact set $F$. 

% avant en sous-section 3.2 mais utile dans la preuve du lemme.

In \cite{chazottes-leplaideur}, it is proved that the critical value $S_c$ is the pressure of the dotted system, with hole $R$, associated to the potential $\phi$. Namely we consider in $\S$ the system $\S_R:=\bigcap_{n\in\Z}\s^{-n}(\S\setminus R)$. Although not explicitly mentioned, the case $\Sigma_R=\emptyset$ appears, when $\min(R)\neq0$. In this case one simply has $S_c=-\infty$ and the identity remains valid with the convention that the pressure of the emptyset is $-\infty$.
The proof in~\cite{chazottes-leplaideur} was done under the assumption that $\Sigma_R$ is mixing. We claim that the mixing hypothesis  can be omitted.  Indeed, any subshift of finite type can be decomposed in irreducible components, which satisfy the mixing property, but for some iteration of the map $\s$ (see e.g.~\cite{bowen}). As we are considering first returns in $R$, note that the word defined by the cylinder $C_{0,r_R(x)}(x)$ contains no $R_i$ but at the first position. 
Now, two different irreducible components can be joined in $\S$ only by a path which contains $R$. Therefore, the word  defined by the cylinder $C_{0,r_R(x)}(x)$ is an admissible word for a unique irreducible component.

Unicity of the equilibrium state in any mixing subshift (for $\phi$) implies that the topological $\phi$-pressure $\CP_\phi(\S_R)$ for $(\S_R,\s)$ is strictly lower than the topological $\phi$-pressure for $\S$, $\CP_\phi(\S)$.

We now finish this subsection with some important characterization for the measure $m_S$. 
\begin{lemma}\label{lem-carac-mus}
The measure $m_S$ is the unique equilibrium state in $(\S,\s)$ associated to $\phi-\log\lambda_S \BBone_R$. Moreover, its $\phi-\log\lambda_S \BBone_R$-pressure is $S$. 
\end{lemma}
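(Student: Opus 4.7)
The plan is in two steps: first to establish that $m_S$ has $(\phi - \log\lambda_S\,\BBone_R)$-pressure equal to $S$, and then to rule out every other invariant measure achieving that value. Both steps rest on the interplay between the full system $(\S,\s,m_S)$ and the induced system $(R,g,\wh\mu_S)$, via Abramov's entropy formula and Kac's lemma.

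For the pressure computation, I would use the spectral results recalled above: $\wh\mu_S$ is the unique equilibrium state of the induced potential $\Phi(y)\pardef S_{r_R(y)}(\phi)(y) - r_R(y)\,S$ for the map $g$, with induced pressure $\log\lambda_S$, so
\[
h_{\wh\mu_S}(g) + \int S_{r_R}(\phi)\,d\wh\mu_S - S\int r_R\,d\wh\mu_S = \log\lambda_S.
\]
I would then invoke three standard identities linking $m_S$ and its restriction $\wh\mu_S = m_S(\cdot\mid R)$: Abramov's formula $h_{m_S}(\s) = m_S(R)\,h_{\wh\mu_S}(g)$; Kac's lemma $m_S(R)\int r_R\,d\wh\mu_S = 1$; and the Birkhoff-sum identity $\int \phi\,dm_S = m_S(R)\int S_{r_R}(\phi)\,d\wh\mu_S$ obtained by cutting an orbit along its returns to $R$. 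Multiplying the displayed equation by $m_S(R)$ and rearranging yields
\[
h_{m_S}(\s) + \int\bigl(\phi - \log\lambda_S\,\BBone_R\bigr)\,dm_S = S,
\]
which simultaneously shows that $m_S$ is an equilibrium state and identifies the pressure.

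For uniqueness, let $\nu$ be any $\s$-invariant probability measure; by affinity of pressure in the measure I may first reduce to $\nu$ ergodic. If $\nu(R)=0$, then $\nu$ is carried by the dotted system $\S_R$ and $\int \BBone_R\,d\nu = 0$, whence
\[
h_\nu(\s) + \int\bigl(\phi - \log\lambda_S\,\BBone_R\bigr)\,d\nu = h_\nu(\s) + \int\phi\,d\nu \le \CP_\phi(\S_R) = S_c < S
\]
by the standing assumption $S > S_c$. If $\nu(R)>0$, then $\s$-almost every point returns to $R$, the conditional measure $\wh\nu\pardef\nu(\cdot\mid R)$ is $g$-invariant, and running the same Abramov--Kac algebra in reverse gives the identity
\[
h_\nu(\s) + \int\bigl(\phi - \log\lambda_S\,\BBone_R\bigr)\,d\nu \;-\; S = \nu(R)\bigl[h_{\wh\nu}(g) + \int \Phi\,d\wh\nu - \log\lambda_S\bigr].
\]
The variational principle for the induced system bounds the bracket by $0$, with equality only if $\wh\nu = \wh\mu_S$ by uniqueness in RPF theory, which forces $\nu=m_S$.

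The main delicate point I anticipate is the careful justification of the Abramov--Kac identities for a general (a priori non-ergodic) $\s$-invariant $\nu$: I would use the ergodic decomposition, apply the dichotomy to each component, and integrate back, using that measures with infinite expected return time to $R$ collapse to the $\nu(R)=0$ case. Modulo this bookkeeping, the proof is essentially the algebraic manipulation displayed above combined with the uniqueness of the RPF equilibrium state on the induced system.
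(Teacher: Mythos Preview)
Your proposal is correct and follows essentially the same route as the paper: both split according to whether $\nu(R)>0$ or $\nu(R)=0$, use the Abramov--Kac relation between the $\s$-pressure and the induced $g$-pressure, and conclude via uniqueness of the equilibrium state for the induced potential. The only cosmetic difference is that the paper quotes the identity $h_{m_S}(\s)+\int\phi\,dm_S=S+m_S(R)\log\lambda_S$ from an earlier reference rather than rederiving it from Abramov and Kac as you do, and the paper does not pass through an ergodic decomposition (it applies the Abramov identity directly to an arbitrary invariant $\nu$).
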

\begin{proof}
For simplicity we set $\beta:=\log\lambda_S$.
The measure $m_S$ satisfies,
\begin{equation}\label{equ2-preuvecvms}
h_{m_S}(\s)+\int\phi\,dm_S=S+m_S(R)\beta.
\end{equation}
We refer the reader to \cite{leplaideur1}, Proposition~6.8 for a proof.  Moreover, the measure $\wh\mu_S$ is the unique equilibrium state for $(R,g)$ associated to the potential $S_{r(\cdot)}(\phi)(\cdot)-S r(\cdot)$, with pressure $\beta$. Let us pick some $\s$-invariant probability measure $\nu$.

Let us first assume that $\nu(R)>0$. We have
\begin{eqnarray*}
h_\nu(\s)+\int\phi\,d\nu-S&=&\nu(R)\left(h_{\nu_{|R}}(g)+\int S_{r(\cdot)}(\phi)\,d\nu_{|R}-S \int r(\cdot)\,d\nu_{|R}\right),\\
&\leq&\nu(R)\beta,
\end{eqnarray*}
where $\nu_{|R}$ is the conditional measure $\nu(\cdot|R)$. This gives 
$$h_\nu(\s)+\int\phi\,d\nu-\beta \int\BBone_R\,d\nu\leq S,$$
with equality if and only if $\nu_{|R}=\wh\mu_S$ ({\it i.e.} $m_S=\nu$).

If we assume that $\nu(R)=0$, then $\nu$ is a $\s$-invariant probability measure with support in $\S_R$. Therefore it must satisfy 
$$h_\nu(\s)+\int\phi\,d\nu-\beta \int\BBone_R\,d\nu=h_\nu(\s)+\int\phi\,d\nu\leq S_c<S.$$
This finishes the proof of the lemma.
\end{proof}

%%%%%%%%%%%%%%%%%%%%%%%
\subsection{Large deviations for return times in cylinders}

In \cite{chazottes-leplaideur}, it is also proved that $\lambda_S\rightarrow+\8$ as $S$ goes to $S_c$. Moreover, the map $S\mapsto \log\lambda_S$ is a decreasing convex map on $]S_c,+\8[$. There also exists some complex neighborhood of $]S_c,+\8[$ such that the map $S\mapsto \log\lambda_S$ admits an analytic continuation on it. In particular the map $S\mapsto\log\lambda_S$ is real-analytic on $]S_c,+\8[$.

Finally, it is proved in \cite{chazottes-leplaideur} that for every $\alpha<\alpha(R):=\CP_\phi(\S)-\CP_\phi(\S_R)$,
\begin{equation}\label{eq2-gd-rectangle}
\lim_\ninf\frac1n\log\int_R e^{\alpha r^n_R(x)}\,d\mu_\phi=\log\lambda_{\CP_\phi(\S)-\alpha}.
\end{equation}

We shall show now that the large deviations for successive return time and entrance time is the same question; namely, the fact that we are starting from the set $R$ or from the whole space to compute the integral does not make any difference.
\begin{proposition}
If $R$ and $S$ are non-empty finite unions of cylinders, then 
\[
\Psi_R(\alpha)=\lim\frac1n\log \int_{S} e^{\alpha r_R^n}d\mu_\phi,
\]
in particular we have $\Psi_R(\alpha)=\log\lambda_{\CP_\phi(\S)-\alpha}$.
\end{proposition}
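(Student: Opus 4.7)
The plan is to compare $\int_S e^{\alpha r_R^n}\,d\mu_\phi$ with $\int_R e^{\alpha r_R^{n-1}}\,d\mu_\phi$, whose exponential growth rate is already known from~\eqref{eq2-gd-rectangle}. The starting point will be the cocycle identity $r_R^n(x)=r_R(x)+r_R^{n-1}(\sigma^{r_R(x)}x)$, valid for every $x\in\Sigma$. Stratifying $S$ by the value $k$ of $\tau:=r_R$ and using the invariance and invertibility of $\sigma$ under $\mu_\phi$ on the two-sided shift, one rewrites
\[
\int_S e^{\alpha r_R^n}\,d\mu_\phi \;=\; \sum_{k\ge 1} e^{\alpha k}\int_{A_k} e^{\alpha r_R^{n-1}(y)}\,d\mu_\phi(y),
\]
where $A_k:=\sigma^k\bigl(S\cap\{r_R=k\}\bigr)\subset R$.

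The core of the argument will be a Gibbs decoupling. For $k$ large enough that the cylinder constraint defining $A_k$ splits into a ``past'' part lying strictly before the defining window of $R$ together with the middle constraint $y\in R$, the quasi-Bernoulli property of $\mu_\phi$ (a standard consequence of the Gibbs property for a H\"older potential on a mixing SFT, yielding conditional independence of past and future given the middle cylinder) should produce uniform constants $0<C_1\le C_2$ with
\[
C_1\frac{\mu_\phi(A_k)}{\mu_\phi(R)}\int_R e^{\alpha r_R^{n-1}}\,d\mu_\phi \;\le\; \int_{A_k} e^{\alpha r_R^{n-1}}\,d\mu_\phi\;\le\; C_2\frac{\mu_\phi(A_k)}{\mu_\phi(R)}\int_R e^{\alpha r_R^{n-1}}\,d\mu_\phi.
\]
For the finitely many small $k$ where past and future coordinate windows overlap, the crude bound $\int_{A_k}e^{\alpha r_R^{n-1}}\,d\mu_\phi\le\int_R e^{\alpha r_R^{n-1}}\,d\mu_\phi$ will suffice to absorb them into enlarged constants.

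Summing in $k$ and using $\mu_\phi(A_k)=\mu_\phi(S\cap\{r_R=k\})$ (again by $\sigma$-invariance), I then expect to obtain
\[
\int_S e^{\alpha r_R^n}\,d\mu_\phi \;\asymp\; \frac{1}{\mu_\phi(R)}\Bigl(\int_S e^{\alpha r_R}\,d\mu_\phi\Bigr)\int_R e^{\alpha r_R^{n-1}}\,d\mu_\phi.
\]
The prefactor is finite for every $\alpha<\alpha(R)=\CP_\phi(\S)-\CP_\phi(\S_R)$ (the exponential tails of $r_R$ already exploited in~\cite{chazottes-leplaideur}) and, by topological mixing, strictly positive since $\mu_\phi(S)>0$ and there is at least one $k$ with $\mu_\phi(S\cap\{r_R=k\})>0$. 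Taking $\tfrac1n\log$ and invoking~\eqref{eq2-gd-rectangle} then yields $\lim\tfrac1n\log\int_S e^{\alpha r_R^n}\,d\mu_\phi=\log\lambda_{\CP_\phi(\S)-\alpha}=\Psi_R(\alpha)$.

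The main obstacle will be carrying out the Gibbs decoupling with multiplicative constants uniform in both $k$ and $n$. Since $R$ and $S$ are just finite unions of cylinders and $\mu_\phi$ is quasi-Bernoulli, this reduces to standard book-keeping on coordinate windows, but the transition between the ``large $k$'' regime (where past and future decouple cleanly through the middle cylinder $R$) and the ``small $k$'' regime (where they overlap) must be handled carefully.
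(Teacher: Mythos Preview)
Your approach via the cocycle identity and stratification by the first hitting time is a valid alternative, but it is considerably more intricate than the paper's argument, and your decoupling step as written has a gap.

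The paper's proof is much shorter. It exploits the $\psi$-mixing inequality~\eqref{equ-psimixquivabien} together with the elementary bounds
\[
r_R^{\,n-(M+2m)}\circ\s^{M+2m}\;\le\; r_R^n\;\le\; M+2m + r_R^{\,n}\circ\s^{M+2m},
\]
valid because at most $M+2m$ returns to $R$ can occur in the first $M+2m$ iterates. Since $\BBone_S$ depends only on coordinates in $[-m,m]$ while $r_R^n\circ\s^{M+2m}$ depends only on coordinates $\ge M+m$, the gap $M$ lets $\psi$-mixing factor $\int_S e^{\alpha r_R^n}d\mu_\phi$ against $\int_\S e^{\alpha r_R^n}d\mu_\phi$ up to bounded multiplicative constants and a bounded shift in $n$. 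Applying this once with $S$ and once with $R$ finishes the proof.

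In your route the decoupling is not as clean as described. The set $A_k$ never splits as ``(past event) $\cap\,R$'': regardless of how large $k$ is, the constraints $\s^{-j}(y)\notin R$ for small $j$ involve coordinates up to index $m-1$, overlapping the defining window of $R$. What \emph{is} true is that within each fixed $(-m,m)$-cylinder $C\subset R$, the event $A_k\cap C$ becomes a constraint on coordinates $<-m$ only, while $r_R^{n-1}$ restricted to $C$ depends only on coordinates $>m$; local product structure then gives $\int_{A_k\cap C}e^{\alpha r_R^{n-1}}d\mu_\phi\asymp\frac{\mu_\phi(A_k\cap C)}{\mu_\phi(C)}\int_C e^{\alpha r_R^{n-1}}d\mu_\phi$. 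Summing over the cylinders $C$ composing $R$ yields your upper bound, but the lower bound does not follow directly: if $A_k$ misses some of the cylinders $C\subset R$, you still need the integrals $\int_C e^{\alpha r_R^{n-1}}d\mu_\phi$ over the different $C$ to be mutually comparable (or else to cover every $C$ by varying $k$). This comparability is true---it follows for instance from the spectral estimate~\eqref{eq1-trouspect-perron}, or indeed from the paper's own lemma---but it is not mere ``book-keeping on coordinate windows''. Once this extra ingredient is supplied your argument goes through, and even delivers a genuine multiplicative asymptotic rather than just the exponential rate; the paper's fixed-shift method trades that sharpness for brevity.
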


\begin{remark}\label{rem-ldprectangle}
As mentioned in the introduction, this readily implies the large deviations principle for return times in the form given by~\eqref{equ-psidonnephi} since the function $\Psi_R$ is differentiable.
\end{remark}

The proposition is a weak consequence of the $\psi$-mixing property of the measure $\mu_\phi$. Indeed, there exists $M>0$ and $\kappa>1$ such that if $f$ and $g$ are two integrable functions such that $f(x)$ only depends on $(x_n)_{n\le p}$ and $g(x)$ only depends on $(x_n)_{n\ge p+M}$ then
\begin{equation}\label{equ-psimixquivabien}
\kappa^{-1}\int f d\mu_\phi\int gd\mu_\phi\le\int fg d\mu_\phi \le \kappa \int fd\mu_\phi \int g d\mu_\phi.
\end{equation}
\begin{lemma}
If $R$ and $S$ are finite union of $(-m,m)$ cylinder then for any $n\ge M+2m$ and for 
\[
\begin{split}
\kappa^{-1}\mu(S)\int_\Sigma e^{\alpha r_R^{n-(M+2m)}} d\mu_\phi
&\le \int_S e^{\alpha r_R^n} d\mu_\phi\le 
\kappa\mu(S)e^{\alpha(M+2m)}\int_\Sigma e^{\alpha r_R^{n}} d\mu_\phi
\quad(\alpha\ge0)\\
\kappa^{-1}\mu(S)e^{\alpha(M+2m)}\int_\Sigma e^{\alpha r_R^{n}} d\mu_\phi
&\le \int_S e^{\alpha r_R^n} d\mu_\phi\le 
\kappa\mu(S)\int_\Sigma  e^{\alpha r_R^{n-(M+2m)}}d\mu_\phi
\quad(\alpha\le0).
\end{split}
\]
\end{lemma}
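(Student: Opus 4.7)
The plan is to use the $\psi$-mixing estimate \eqref{equ-psimixquivabien} to decouple the indicator $\BBone_S$ from the weight $e^{\alpha r_R^n}$, after first shifting the return-time counter far enough to the right. Set $T := M + 2m$. Since $R$ and $S$ are unions of $(-m,m)$-cylinders, the function $x \mapsto r_R^n(\s^T x)$ depends only on the coordinates $x_k$ with $k \ge T - m + 1 = M + m + 1$, whereas $\BBone_S(x)$ depends only on $x_k$ with $-m \le k \le m$. These two coordinate ranges are separated by a gap of at least $M$ symbols, so \eqref{equ-psimixquivabien} applied to $f = \BBone_S$ and $g = e^{\alpha r_R^n \circ \s^T}$ yields
\[
\kappa^{-1}\mu_\phi(S)\int_\S e^{\alpha r_R^n \circ \s^T}\,d\mu_\phi \le \int_S e^{\alpha r_R^n \circ \s^T}\,d\mu_\phi \le \kappa\,\mu_\phi(S)\int_\S e^{\alpha r_R^n \circ \s^T}\,d\mu_\phi,
\]
and the $\s$-invariance of $\mu_\phi$ lets me replace each full integral by $\int e^{\alpha r_R^n}\,d\mu_\phi$.

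Next, I would compare $r_R^n(x)$ with $r_R^n(\s^T x)$ pointwise. Writing $N_T(x)$ for the number of indices $1 \le j \le T$ with $\s^j x \in R$, a direct enumeration of the successive returns gives the identity
\[
r_R^n(x) = T + r_R^{n - N_T(x)}(\s^T x),
\]
valid as soon as $n > T$. Since $0 \le N_T(x) \le T$ and $j \mapsto r_R^j$ is non-decreasing, this sandwiches $r_R^n(x)$ between $T + r_R^{n-T}(\s^T x)$ and $T + r_R^n(\s^T x)$. Exponentiating with $\alpha$ produces a factor $e^{\alpha T}$ whose sign determines which side of the sandwich serves as the upper and which as the lower bound, and combining with the $\psi$-mixing inequality above delivers the four claimed estimates; in two of the four, the factor $e^{\alpha T}$ is favourable (that is, $\ge 1$ on the lower side when $\alpha \ge 0$, and $\le 1$ on the upper side when $\alpha \le 0$) and may simply be dropped to match the form written in the lemma.

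The main obstacle, beyond verifying the measurability ranges for $\psi$-mixing (which amounts to a careful count of the symbols needed to read off $r_R^n \circ \s^T$), is the combinatorial bookkeeping that converts the identity $r_R^n(x) = T + r_R^{n - N_T(x)}(\s^T x)$ into the two-sided pointwise bound: one must match the correct orientation of $T + r_R^{n-T}(\s^T x)$ versus $T + r_R^n(\s^T x)$ with the sign of $\alpha$ so that the $e^{\alpha T}$ factor always appears on the weaker side of each of the four inequalities. Once this matching is organised correctly, the assembly is routine.
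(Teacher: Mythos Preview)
Your approach is correct and essentially identical to the paper's: the paper compresses your identity $r_R^n(x)=T+r_R^{\,n-N_T(x)}(\s^T x)$ into the single pointwise sandwich $r_R^{\,n-T}\circ\s^T\le r_R^n\le T+r_R^n\circ\s^T$ and then invokes the $\psi$-mixing inequality~\eqref{equ-psimixquivabien} exactly as you do. Your version makes the combinatorics explicit (and in fact yields the slightly sharper lower bound $r_R^n\ge T+r_R^{\,n-T}\circ\s^T$), but the argument is the same; just note that the identity already holds for $n\ge T$ with the convention $r_R^0=0$, matching the range in the statement.
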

\begin{proof}
For any $n\ge M+2m$ we have
\[
r_R^{n-(M+2m)}\circ f^{M+2m} \le r_R^n \le M+2m+r_R^{n}\circ f^{M+2m}
\]
from which the result follows by inequality \eqref{equ-psimixquivabien}. 
\end{proof}
The proof of the proposition consists in applying twice the lemma : from the integration over $S$ to $\Sigma$ and then to $R$.

%%%%%%%%%%%%%%%%%%%%%%%%%%%%%%%%%%%%%%
\subsection{Concentration of the mass}

Let $R$ be a finite union of cylinders. The large deviations principle holds for the return times $r^n_R$. It is well-known that this implies a kind of concentration of the mass. 

\begin{proposition}\label{prop-concen-mass}
Let $R$ be a finite union of cylinders. Let $\alpha$ and $\delta>0$ such that $\alpha+\delta<\alpha(R)$.
Then for every $\tau>\displaystyle\frac{\Psi_R(\alpha+\delta)-\Psi_R(\delta)}{\delta}$ we have
$$\Psi_{R}(\alpha)=\lim_\ninf\frac1n\log\int e^{\alpha r^n_{R}} d\mu_\phi=\lim_\ninf\frac1n\log\int_{\left\{r^n_{R}\leq n\tau\right\}}e^{\alpha r^n_{R}}d\mu_\phi.$$
\end{proposition}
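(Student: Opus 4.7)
The first equality is essentially a restatement of the preceding proposition, so the real content is the second equality, which asserts that restricting the integral to $\{r^n_R \le n\tau\}$ does not change its exponential growth rate. The upper bound is trivial, since cutting the domain of integration of a positive integrand can only decrease it. For the lower bound, the natural strategy is to bound the tail
$$I_n := \int_{\{r^n_R > n\tau\}} e^{\alpha r^n_R} d\mu_\phi$$
and show that it grows strictly slower than $e^{n\Psi_R(\alpha)}$. Combined with the decomposition
$\int e^{\alpha r^n_R} d\mu_\phi = \int_{\{r^n_R\le n\tau\}} e^{\alpha r^n_R} d\mu_\phi + I_n$
and the known growth rate $\Psi_R(\alpha)$ of the full integral, this forces the truncated integral to grow at rate $\Psi_R(\alpha)$ as well.

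To bound $I_n$ I would use the standard Markov/Chebyshev trick: on the set $\{r^n_R > n\tau\}$ one has $e^{\delta(r^n_R - n\tau)} \ge 1$, so
$$I_n \le e^{-n\tau\delta}\int e^{(\alpha+\delta) r^n_R}\,d\mu_\phi.$$
Since $\alpha+\delta < \alpha(R)$, the preceding proposition applies to the integral on the right and gives exponential rate $\Psi_R(\alpha+\delta)$. Taking $\frac1n\log$ and passing to the $\limsup$ yields
$$\limsup_\ninf \frac1n\log I_n \le -\tau\delta+\Psi_R(\alpha+\delta).$$

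The remaining step is to turn the hypothesis on $\tau$ into the strict inequality $-\tau\delta+\Psi_R(\alpha+\delta)<\Psi_R(\alpha)$. This is the only point requiring care: the Chebyshev estimate above naturally calls for $\tau\delta > \Psi_R(\alpha+\delta)-\Psi_R(\alpha)$, whereas the assumption is phrased as $\tau\delta > \Psi_R(\alpha+\delta)-\Psi_R(\delta)$; these are reconciled using that $S\mapsto \log\lambda_S$ is decreasing on $]S_c,+\infty[$ (so $\Psi_R$ is increasing) together with its convexity, in the regime $\alpha+\delta<\alpha(R)$. Once this strict inequality is established, the tail $I_n$ is negligible on the exponential scale and the lower bound
$$\liminf_\ninf \frac1n\log \int_{\{r^n_R\le n\tau\}} e^{\alpha r^n_R} d\mu_\phi \ge \Psi_R(\alpha)$$
drops out, completing the proof. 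The main obstacle is really only that last bookkeeping step comparing $\Psi_R(\alpha)$ and $\Psi_R(\delta)$; the probabilistic/analytic core of the argument is just one line of Chebyshev followed by an application of the preceding proposition.
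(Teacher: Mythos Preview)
Your approach is exactly the paper's: multiply and divide by $e^{\delta r_R^n}$, apply the Markov inequality, and use the known exponential rate $\Psi_R(\alpha+\delta)$ to show that the tail is $o(e^{n\Psi_R(\alpha)})$. The paper's proof is precisely this one-line Chebyshev computation.

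You were right to flag the discrepancy between the stated hypothesis $\tau\delta>\Psi_R(\alpha+\delta)-\Psi_R(\delta)$ and the inequality the argument actually needs, namely $\tau\delta>\Psi_R(\alpha+\delta)-\Psi_R(\alpha)$. The paper does not reconcile them either: its very first line simply picks $\eps>0$ with $-\delta\tau+\Psi_R(\alpha+\delta)+\eps\le\Psi_R(\alpha)-\eps$, which presupposes the second inequality. This strongly suggests the ``$\Psi_R(\delta)$'' in the statement is a typo for ``$\Psi_R(\alpha)$''.

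Your proposed fix via monotonicity and convexity does not salvage the statement as written. What you would need is $\Psi_R(\alpha)\ge\Psi_R(\delta)$, and since $\Psi_R$ is strictly increasing this is equivalent to $\alpha\ge\delta$, which is not assumed. Convexity gives $\Psi_R(\alpha+\delta)-\Psi_R(\delta)\ge\Psi_R(\alpha)-\Psi_R(0)=\Psi_R(\alpha)$, but that goes in the wrong direction for what you need. So either one adds the harmless extra hypothesis $\alpha\ge\delta$, or one reads the statement with $\Psi_R(\alpha)$ in place of $\Psi_R(\delta)$; in both cases your (and the paper's) argument goes through verbatim. The later application in the paper only uses the proposition to pick some sufficiently large $\tau$, so the precise form of the threshold is immaterial there.
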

\begin{proof}
Take $\eps>0$ so small that $-\delta\tau+\Psi_R(\alpha+\delta)+\eps\le\Psi_R(\alpha)-\eps$.
Using Markov inequality we get 
\begin{eqnarray*}
   \int_{r^n_{R}>n\tau}e^{\alpha r^n_{R}}d\mu_\phi&=&
\int_{r^n_{R}>n\tau}e^{(\alpha+\delta) r^n_{R}}e^{-\delta r^n_{R}}d\mu_\phi\\
&\leq &e^{-\delta n\tau}\int_{r^n_{R}>n\tau}e^{(\alpha+\delta) r^n_{R}}d\mu_\phi\\
&\leq & e^{-\delta n\tau}\int e^{(\alpha+\delta) r^n_{R}}d\mu_\phi\\
&\leq & e^{-\delta n\tau}e^{n (\Psi_{R}(\alpha+\delta)+\eps)}\\
&=& o(e^{n \Psi_{R}(\alpha)})
      \end{eqnarray*} 
\end{proof}

%%%%%%%%%%%%%%%%%%%%%%%%%%%%%%
%%%%%%%%%%%%%%%%%%%%%%%%%%%%%%%
\section{Existence of inner and outer approximations, their properties and consequences of their equality}\label{sec-exis-psiA}

In the first subsection we prove a monotonicity result about cumulant generating functions.
The idea is to approximate the set $A$ from the inside and from the outside by finite unions of cylinders, and show that the inner cumulant generating function $\Psiin$ and the outer cumulant generating function $\Psiout$ exist. 
Finally, we study the consequence of their equality on the cumulant generating function and the rate function for the set $A$.

%%%%%%%%%%%%%%%%%%%%%%%%%%%%%%%%%%

\subsection{Monotonicity of the cumulant generating function on rectangles} 

For $m$ an integer, let $\CB_m$ be the biggest union of $m$-cylinders contained in $A$
and  $\CC_m$ be the smallest union of $m$-cylinders which contains $A$.  
Then, we denote by $\CD_m$ the set $\CC_m\setminus\CB_m$ (See Figure~\ref{fig-bmacm}).
As any $(-m,m)$-cylinder is a union of $(-m-1,m+1)$-cylinders, we have $\CB_m\subset\CB_{m+1}\subset A\subset\CC_{m+1}\subset\CC_m$; Therefore $(\CD_m)$ is a decreasing sequence of compact set which converges to $\pA$. 

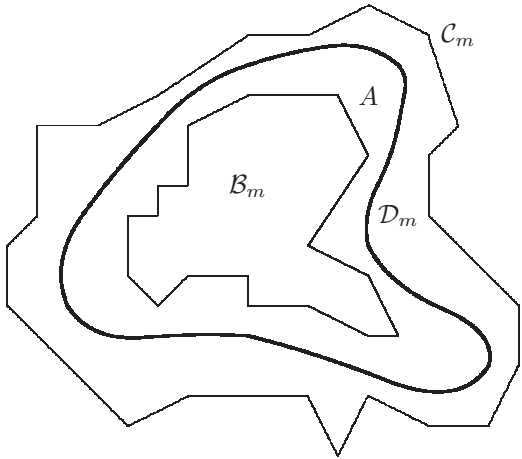
\begin{figure}

\unitlength 0.8 mm
\begin{picture}(95,80)(0,0)
\linethickness{0.3mm}
\qbezier(50,70)(45.56,68.63)(41.82,66.07)
\qbezier(41.82,66.07)(38.07,63.51)(35,60)
\qbezier(35,60)(28.59,53.37)(22.77,45.43)
\qbezier(22.77,45.43)(16.95,37.49)(20,30)
\qbezier(20,30)(23.87,24)(32.8,24.74)
\qbezier(32.8,24.74)(41.73,25.47)(50,25)
\qbezier(50,25)(61.02,22.32)(73.11,17.56)
\qbezier(73.11,17.56)(85.21,12.8)(90,20)
\qbezier(90,20)(91.51,25.52)(82.4,29.46)
\qbezier(82.4,29.46)(73.3,33.4)(70,40)
\qbezier(70,40)(69,45.03)(71.47,49.89)
\qbezier(71.47,49.89)(73.94,54.75)(75,60)
\qbezier(75,60)(75.5,62.65)(76.04,65.4)
\qbezier(76.04,65.4)(76.59,68.15)(75,70)
\qbezier(75,70)(70.34,74.1)(63.37,73.14)
\qbezier(63.37,73.14)(56.41,72.19)(50,70)
\linethickness{0.1mm}
\multiput(25,60)(0.24,0.12){42}{\line(1,0){0.24}}
\multiput(35,65)(0.18,0.12){83}{\line(1,0){0.18}}
\put(50,75){\line(1,0){10}}
\multiput(60,75)(0.24,0.12){42}{\line(1,0){0.24}}
\multiput(70,80)(0.24,-0.12){42}{\line(1,0){0.24}}
\multiput(80,75)(0.12,-0.36){42}{\line(0,-1){0.36}}
\multiput(80,55)(0.12,0.12){42}{\line(1,0){0.12}}
\put(80,45){\line(0,1){10}}
\multiput(80,45)(0.12,-0.12){83}{\line(1,0){0.12}}
\multiput(90,35)(0.12,-0.12){42}{\line(1,0){0.12}}
\put(95,20){\line(0,1){10}}
\multiput(90,10)(0.12,0.24){42}{\line(0,1){0.24}}
\put(80,10){\line(1,0){10}}
\multiput(70,15)(0.24,-0.12){42}{\line(1,0){0.24}}
\multiput(65,5)(0.12,0.24){42}{\line(0,1){0.24}}
\multiput(60,15)(0.12,-0.24){42}{\line(0,-1){0.24}}
\put(45,15){\line(1,0){15}}
\put(40,15){\line(1,0){5}}
\multiput(30,10)(0.24,0.12){42}{\line(1,0){0.24}}
\multiput(25,15)(0.12,-0.12){42}{\line(1,0){0.12}}
\multiput(15,25)(0.12,-0.12){83}{\line(1,0){0.12}}
\multiput(10,30)(0.12,-0.12){42}{\line(1,0){0.12}}
\put(10,30){\line(0,1){10}}
\multiput(10,40)(0.12,0.12){42}{\line(1,0){0.12}}
\put(15,45){\line(0,1){15}}
\put(15,60){\line(1,0){10}}
\linethickness{0.1mm}
\put(30,40){\line(0,1){5}}
\put(30,45){\line(1,0){5}}
\put(35,45){\line(0,1){5}}
\put(35,50){\line(1,0){5}}
\put(40,50){\line(0,1){10}}
\multiput(40,60)(0.24,0.12){42}{\line(1,0){0.24}}
\qbezier(50,65)(50.95,65.01)(57.55,65.03)
\qbezier(57.55,65.03)(64.14,65.05)(65,65)
\qbezier(65,65)(65.24,64.33)(67.47,59.98)
\qbezier(67.47,59.98)(69.7,55.63)(70,55)
\qbezier(70,55)(69.36,54.05)(64.99,47.5)
\qbezier(64.99,47.5)(60.63,40.94)(60,40)
\qbezier(60,40)(60.62,39.69)(65,37.5)
\qbezier(65,37.5)(69.38,35.31)(70,35)
\qbezier(70,35)(70.31,34.37)(72.5,30)
\qbezier(72.5,30)(74.69,25.62)(75,25)
\qbezier(75,25)(74.69,25)(72.5,25)
\qbezier(72.5,25)(70.31,25)(70,25)
\qbezier(70,25)(69.37,25.31)(65,27.5)
\qbezier(65,27.5)(60.63,29.69)(60,30)
\qbezier(60,30)(59.37,30)(55,30)
\qbezier(55,30)(50.62,30)(50,30)
\put(50,30){\line(0,1){5}}
\put(40,35){\line(1,0){10}}
\multiput(35,30)(0.12,0.12){42}{\line(1,0){0.12}}
\multiput(30,35)(0.12,-0.12){42}{\line(1,0){0.12}}
\put(30,35){\line(0,1){5}}
\put(50,50){\makebox(0,0)[cc]{$\CB_m$}}

\put(70,65){\makebox(0,0)[cc]{$A$}}

\put(85,75){\makebox(0,0)[cc]{$\CC_m$}}

\put(75,45){\makebox(0,0)[cc]{$\CD_m$}}

\end{picture}
\caption{Inner and outer approximation of the set $A$ by $m$-cylinders.}\label{fig-bmacm}
\end{figure}

Following what is done above, there exists two analytic functions $\Psi_{\CB_m}$ and $\Psi_{\CC_m}$ respectively defined on $]-\8,\alpha(\CB_m)[$ and $]-\8,\alpha(\CC_m)[$. 
As it is said above, $\alpha(\CB_m)$ is the difference between $\CP_\phi(\S)$ and the topological $\phi$-pressure of the dotted system $\S_{\CB_m}$. In the same way, $\alpha(\CC_m)$ is the difference between $\CP_\phi(\S)$ and the topological $\phi$-pressure of the dotted system $\S_{\CC_m}$. Now, we clearly have $\S_{\CC_m}\subset\S_{\CB_m}$, because $\CC_m\supset\CB_m$. We also have $\CC_{m+1}\subset\CC_m$ and $\CB_m\subset\CB_{m+1}$. Therefore the sequence $(\alpha(\CB_m))_m$ is non-decreasing and the sequence $(\alpha(\CC_m))_m$ is non-increasing. Moreover, for any $m$, 
$$\alpha(\CB_m)\leq \alpha(\CC_m).$$
The sequence $(\alpha(\CB_m))_m$ is thus converging to some limit $\alphain\in(0,+\8]$. Hence, for any $\alpha<\alphain$, and for any sufficiently large $m$, the functions $\Psi_{\CB_m}$ and $\Psi_{\CC_m}$ are real-analytic on $]-\8,\alpha[$. 

Let us define the lower and upper cumulant generating functions of $A$ by
\[
\ul\Psi_A(\alpha)=\liminf_{n\to\infty}\frac1n \log\int e^{\alpha r_A^n}d\mu_\phi
\quad\text{and}\quad
\ol\Psi_A(\alpha)=\limsup_{n\to\infty}\frac1n \log\int e^{\alpha r_A^n}d\mu_\phi.
\]

\begin{proposition}\label{prop-inegal-gd}
For any $0\leq \alpha<\alphain$ and for any sufficiently large $m$, we have 
$$\Psi_{\CC_m}(\alpha)\leq \ul\Psi_A(\alpha)\leq\ol\Psi_A(\alpha)\leq \Psi_{\CB_m}(\alpha).$$
For any $\alpha<0$ we have 
$$\Psi_{\CB_m}(\alpha)\leq \ul\Psi_A(\alpha)\leq\ol\Psi_A(\alpha)\leq \Psi_{\CC_m}(\alpha).$$
\end{proposition}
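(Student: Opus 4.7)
The proof plan is essentially a monotonicity argument, exploiting the sandwich $\CB_m \subset A \subset \CC_m$ and the fact that the cumulant generating functions for unions of cylinders exist as genuine limits (by the previous subsection).

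\textbf{Step 1: Monotonicity of return times in the target set.} First I would observe that if $D \subset D'$ then every visit to $D$ is a visit to $D'$, so $r_{D'}(x) \le r_D(x)$ for every $x$. Iterating this, the $n$-th visit to $D'$ occurs no later than the $n$-th visit to $D$, i.e.\ $r^n_{D'}(x) \le r^n_D(x)$ for every $n\ge 1$ and every $x$. Applying this to the inclusions $\CB_m \subset A \subset \CC_m$ gives the pointwise chain
\[
r^n_{\CC_m}(x) \;\le\; r^n_A(x) \;\le\; r^n_{\CB_m}(x).
\]

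\textbf{Step 2: Translate into cumulants.} For $\alpha \ge 0$, monotonicity of $t \mapsto e^{\alpha t}$ combined with the above gives
\[
\int e^{\alpha r^n_{\CC_m}}\,d\mu_\phi \;\le\; \int e^{\alpha r^n_A}\,d\mu_\phi \;\le\; \int e^{\alpha r^n_{\CB_m}}\,d\mu_\phi.
\]
For $\alpha < 0$ the inequalities flip. Taking $\frac{1}{n}\log$ and then a $\liminf$ (resp.\ $\limsup$) of the outer terms, and remembering that by the Proposition at the end of Section~\ref{sec-CB-CC} the quantities $\Psi_{\CC_m}(\alpha)$ and $\Psi_{\CB_m}(\alpha)$ exist as genuine limits (not just lim sup), yields for $\alpha \ge 0$
\[
\Psi_{\CC_m}(\alpha) \;\le\; \ul\Psi_A(\alpha) \;\le\; \ol\Psi_A(\alpha) \;\le\; \Psi_{\CB_m}(\alpha),
\]
and the reverse chain for $\alpha < 0$.

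\textbf{Step 3: Choosing $m$.} For the inequalities to be non-trivial, both endpoints must be finite. The outer cumulant $\Psi_{\CC_m}(\alpha)$ is finite on $(-\infty, \alpha(\CC_m))$ and the inner one $\Psi_{\CB_m}(\alpha)$ on $(-\infty, \alpha(\CB_m))$. Since $\alpha(\CB_m) \nearrow \alphain$, given any $\alpha < \alphain$ I can pick $m$ large enough that $\alpha < \alpha(\CB_m) \le \alpha(\CC_m)$; from that $m$ onwards both sides are real-analytic at $\alpha$ and the sandwich is meaningful. For $\alpha < 0$ the functions are defined for every $m$, so no restriction on $m$ is needed there apart from the requirement that $\CB_m \ne \emptyset$ (true for $m$ large enough, as soon as $A$ contains a cylinder of length $2m+1$).

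\textbf{Main obstacle.} There is no substantial analytic obstacle here: the proposition is essentially a soft monotonicity statement. The only point that requires care is confirming that the iterated return time $r^n_D$ is monotone in $D$ (Step 1), which one sees by induction on $n$ using the cocycle identity $r^{n+1}_D(x) = r^n_D(x) + r_D(\sigma^{r^n_D(x)}(x))$ together with the base case $r_D \ge r_{D'}$ for $D \subset D'$. The nontrivial content, namely the existence of $\Psi_{\CB_m}$ and $\Psi_{\CC_m}$ as limits together with the critical values $\alpha(\CB_m)$, $\alpha(\CC_m)$, is already supplied by Section~\ref{sec-CB-CC}.
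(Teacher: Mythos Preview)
Your proposal is correct and follows exactly the paper's approach: the paper's own proof is a one-liner observing that $\CB_m\subset A\subset\CC_m$ gives $r^n_{\CB_m}\ge r^n_A\ge r^n_{\CC_m}$, and then the result follows ``immediately by integration and taking the appropriate limits.'' Your Steps 1--3 simply spell out these details (monotonicity of $r^n_D$ in $D$, integration, and the choice of $m$ so that $\alpha<\alpha(\CB_m)$), which the paper leaves implicit.
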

\begin{proof}
The double inclusion $\CB_m\subset A\subset\CC_m$ implies that  $r^n_{\CB_m}\geq r^n_A\geq r^n_{\CC_m}$. The result follows then immediately by integration and taking the appropriate limits.
\end{proof}

\begin{remark}\label{rem1-ineg-psi}
Forgetting the set $A$ in the previous proof, we have in fact proved that if $\CB$ and $\CC$ are finite unions of cylinders satisfying $\CB\subset\CC$, then  for any $\alpha<0$,
\begin{equation}\label{equ3-inega-psi}
  \Psi_\CB(\alpha)\leq \Psi_\CC(\alpha),
\end{equation}
and for any $\alpha\geq 0$ (but sufficiently small such that the functions are well-defined)
\begin{equation}\label{equ4-inega-psi}
 \Psi_\CB(\alpha)\geq \Psi_\CC(\alpha).
\end{equation}
\end{remark}

%%%%%%%%%%%%%%%%%%%%%%%%%%%
\subsection{Existence of inner and outer cumulant generating functions}

Let us pick some $0<\alpha<\alphain$. By  (\ref{equ4-inega-psi}), the sequence of functions $(\Psi_{\CB_m})_m$ is a non-increasing sequence of non-decreasing convex functions on $[0,\alpha[$ (for sufficiently large $m$ !). It thus (simply) converges to some limit function ${\Psiin}$. This function ${\Psiin}$ has to be convex, thus continuous on $]0,\alpha[$. It also has to be non-decreasing, thus it must be continuous on $[0,\alpha[$. Moreover the Dini Theorem yields that the convergence is uniform on every compact set included in $[0,\alpha[$. This occurs for any $0<\alpha<\alphain$, thus the limit function ${\Psiin}$ is non-decreasing and continuous on $[0,\alphain[$ and the convergence is uniform on every compact set included in $[0,\alphain[$.

In the same way the sequence of functions $(\Psi_{\CC_m})_m$ is a non-decreasing sequence of non-decreasing convex functions on $[0,\alphain[$ (for any $m$ !). It thus (simply) converges to some limit function ${\Psiout}$; this function ${\Psiout}$ is convex and continuous on $]0,\alphain[$. Note that by (\ref{equ4-inega-psi}) we have 
$$0\leq {\Psiout}\leq {\Psiin}.$$
As $\Psi_{\CB_m}(0)=\Psi_{\CC_m}(0)=0$ for any $m$, ${\Psiout}$ is continuous on $[0,\alphain[$, and the convergence is uniform. 

We do the same work on $]-\8,0]$ using (\ref{equ3-inega-psi}) instead of (\ref{equ4-inega-psi}). Note that for $\alpha\leq 0$ we have 
$$0\geq {\Psiout}(\alpha)\geq {\Psiin}(\alpha).$$
The two functions ${\Psiin}$ and ${\Psiout}$ are convex and non-decreasing.  By Proposition~\ref{prop-inegal-gd} we have 
$${\Psiout}(\alpha)\leq \ul\Psi_A(\alpha)\leq \ol\Psi_A(\alpha)\leq {\Psiin}(\alpha) \mbox{ for }\alpha\geq 0,$$
$$\mbox{and }{\Psiin}(\alpha)\leq \ul\Psi_A(\alpha)\leq \ol\Psi_A(\alpha)\leq {\Psiout}(\alpha) \mbox{ for }\alpha\leq 0.$$

We emphasize that the existence of the limit $\Psi_A(\alpha)$ immediately follows from the equality $\Psiin(\alpha)=\Psiout(\alpha)$. Moreover, it also implies, quite surprisingly, despite any knowledge about the  differentiability of the function $\Psi_A$, that $\Phi_A$ exists and is the Legendre transform of $\Psi_A$:

\begin{proposition}\label{pro-inegalout}
 If $\Psiin=\Psiout$ on an open interval $(\ul\alpha,\ol\alpha)$ then 
 
 (i) the cumulant generating function $\Psi_A$ exists, is equal to $\Psiin=\Psiout$ on this interval and it is a convex, continuous, non-decreasing function. In addition $\Psi_A$ is differentiable for all but countably many points and left and right limits of $\Psi_A'$ exists everywhere on the closure of the interval.
 
 (ii)  for all $u\in(\Psi_A'(\ul\alpha+),\Psi_A'(\ol\alpha-))$, the rate function $\Phi_A(u)$ exists and satisfies the relation 
 \[
 \Phi_A(u)=\inf_{\alpha\in(\ul\alpha,\ol\alpha)}\{\Psi_A(\alpha)-\alpha u\}.
 \]

(iii) if $\ul\alpha=-\infty$ then for any $u<\Psi_A'(\ol\alpha)$, except possibly for $u=\Psi_A'(-\infty)$, the rate function $\Phi_A(u)$ exists and satisfies the relation 
\[
 \Phi_A(u)=\inf_{\alpha<\ol\alpha}\{\Psi_A(\alpha)-\alpha u\}.
\]
\end{proposition}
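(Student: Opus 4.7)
Plan. For part~(i), the hypothesis $\Psiin=\Psiout$ on $(\ul\alpha,\ol\alpha)$ combined with the sandwich inequalities $\Psiout\leq\ul\Psi_A\leq\ol\Psi_A\leq\Psiin$ (for $\alpha\geq 0$) and the reversed version for $\alpha\leq 0$ established in the previous subsection forces $\ul\Psi_A=\ol\Psi_A$ on the whole interval; thus the limit defining $\Psi_A$ exists there and coincides with $\Psiin=\Psiout$. Each $\Psi_{\CB_m}$ is a logarithm of a moment generating function, hence convex and non-decreasing, with $\Psi_{\CB_m}(0)=0$, and these properties pass to the uniform-on-compacts limit. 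The remaining regularity statements --- continuity on the interval, existence of one-sided derivatives on the closure, and differentiability outside a countable set --- are then standard consequences of convexity on an open interval.

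For part~(ii), I would prove the two LDP bounds separately. The upper bound is Markov's inequality: for $u>1/\mu_\phi(A)$ and any $\alpha\in[0,\ol\alpha)$,
\[
\mu_\phi\{r_A^n\geq nu\}\leq e^{-\alpha n u}\int e^{\alpha r_A^n}\,d\mu_\phi,
\]
so $\limsup_n \frac1n\log\mu_\phi\{r_A^n\geq nu\}\leq\Psi_A(\alpha)-\alpha u$, and minimizing over $\alpha$ yields $\limsup\leq\Phi_A(u)$; the symmetric argument on $(\ul\alpha,0]$ handles $u<1/\mu_\phi(A)$. For the lower bound, I would use the monotone approximation: since $\CC_m\supset A$ one has $r_{\CC_m}^n\leq r_A^n$, hence $\mu_\phi\{r_A^n\geq nu\}\geq\mu_\phi\{r_{\CC_m}^n\geq nu\}$, and by Remark~\ref{rem-ldprectangle} the LDP for $r_{\CC_m}^n$ with rate function $\Phi_{\CC_m}$ gives
\[
\liminf_n\frac1n\log\mu_\phi\{r_A^n\geq nu\}\geq\Phi_{\CC_m}(u)\quad\text{for every }m.
\]

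The main obstacle is then to show that $\lim_m\Phi_{\CC_m}(u)=\Phi_A(u)$ for $u\in(\Psi_A'(\ul\alpha+),\Psi_A'(\ol\alpha-))$. Monotonicity gives $\Phi_{\CC_m}\leq\Phi_A$ and an increasing limit, but the desired reverse inequality amounts to a minimax interchange that is not automatic. I would fix $\alpha^*\in(\ul\alpha,\ol\alpha)$ with $u$ in the subdifferential of $\Psi_A$ at $\alpha^*$ and argue that $g_m(\alpha):=\Psi_{\CC_m}(\alpha)-\alpha u$ is convex on its domain $(-\8,\alpha(\CC_m))$ and coercive at both endpoints: the convexity lower bound $\Psi_{\CC_m}(\alpha)\geq \alpha/\mu_\phi(\CC_m)$ combined with $u>1/\mu_\phi(\CC_m)$ forces $g_m(\alpha)\to+\8$ as $\alpha\to-\8$, while $\Psi_{\CC_m}(\alpha)\to+\8$ as $\alpha\to\alpha(\CC_m)^-$. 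Hence the minimizers $\alpha_m$ remain in a fixed compact neighborhood of $\alpha^*$ for large $m$, and Dini uniform convergence on that compact set yields $\Phi_{\CC_m}(u)=g_m(\alpha_m)\to g(\alpha^*)=\Phi_A(u)$. Combined with the Chebyshev upper bound and the symmetric treatment on the left of the mean, this closes the LDP on the stated interval. Part~(iii) is essentially the same argument, except that the coercivity at $-\8$ is lost; this forces the exclusion of $u=\Psi_A'(-\8)$, for which the minimizer would escape to $-\8$ and the compactness step fails.
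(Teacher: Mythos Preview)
Your approach is essentially the paper's: Markov's inequality for the upper bound, and for the lower bound the comparison $\mu_\phi\{r_A^n\ge nu\}\ge\mu_\phi\{r_{\CC_m}^n\ge nu\}$ together with the LDP for cylinders, followed by showing $\Phi_{\CC_m}(u)\to\inf_\alpha\{\Psi_A(\alpha)-\alpha u\}$ (and symmetrically with $\CB_m$ for $u<1/\mu_\phi(A)$).

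The one step that is not yet justified is the sentence ``Hence the minimizers $\alpha_m$ remain in a fixed compact neighborhood of $\alpha^*$ for large $m$.'' Pointwise coercivity of each $g_m$ at its endpoints yields existence of a minimizer for each $m$, but not a compact set \emph{independent of $m$}: the rate of blowup at $-\infty$ is governed by $1/\mu_\phi(\CC_m)-u$, and the right endpoint $\alpha(\CC_m)$ moves with $m$. The paper closes this exactly the way your argument needs: choose $\ol\beta\in(\alpha_*,\ol\alpha)$ with $\Psi_A(\ol\beta)-\ol\beta u>\Psi_A(\alpha_*)-\alpha_* u+2\eps$, then take $m$ large enough that $\Psi_{\CC_m}\le\Psi_A\le\Psi_{\CC_m}+\eps$ on $[0,\ol\beta]$; this gives $g_m(\ol\beta)>g_m(\alpha_*)$, and convexity of $g_m$ together with $g_m'(0)=1/\mu_\phi(\CC_m)-u<0$ forces the global infimum of $g_m$ to be attained in $[0,\ol\beta]$, where the uniform convergence does the rest. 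With this refinement your proof coincides with the paper's.
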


\begin{proof}
(i) is straightforward.

(ii) Using exponential Markov inequality we immediately get the upper bound
\begin{equation}\label{eq:phiasup}
\begin{split}
\ol\Phi_A(u):=\limsup_{n\to\infty}\frac1n\log\mu_\phi(r_A^n\ge nu)&\le\inf_{\alpha}\Psi_A(\alpha)-\alpha u
\quad (u>\frac1\mu(A))\\
\ol\Phi_A(u):=\limsup_{n\to\infty}\frac1n\log\mu_\phi(r_A^n\le nu)&\le\inf_{\alpha}\Psi_A(\alpha)-\alpha u
\quad (u<\frac1\mu(A)).
\end{split}
\end{equation}
We now prove that for $\ul\Phi_A$ defined with the $\liminf$ the lower bound also holds true.
Fix $u\in (\Psi_A'(\ul\alpha+),\Psi_A'(\ol\alpha-))$. By convexity of $\Psi_A$ the function $\Psi_A(\alpha)-\alpha u$ attains its infimum for some 
$\alpha_*\in(\ul\alpha,\ol\alpha)$, and the limits at the endpoints of the interval are strictly larger.
Let $\eps>0$ so small that there exists $\ul\beta<\alpha_*<\ol\beta$ in the interval $(\ul\alpha,\ol\alpha)$ such that for $\alpha=\ul\beta$ and $\ol\beta$ we have $\Psi_A(\alpha)-\alpha u>\Psi_A(\alpha_*)-\alpha_* u+2\eps$.

Suppose $\alpha_*>0$, i.e. $u>\frac1{\mu(A)}$. By equality of the outer approximation $\Psiout$ with $\Psi_A$, there exists a set $\CC$ which is a finite union of cylinders such that $A\subset \CC$ and $\Psi_{\CC}\le\Psi_A\le\Psi_\CC+\eps$ on the interval $[0,\ol\beta]$. This implies that $\Psi_\CC(\ol\beta)-\ol\beta u>\Psi_\CC(\alpha_*)-\alpha_* u$, therefore by convexity of $\Psi_\CC(\alpha)-\alpha u$ we have
\begin{equation}\label{eq:infpsi}
\inf_\alpha\Psi_\CC(\alpha)-\alpha u = 
\inf_{\alpha\in[0,\ol\beta]}\Psi_\CC(\alpha)-\alpha u
\ge \inf_\alpha\Psi_A(\alpha)-\alpha u-\eps.
\end{equation}
On the other hand $r_\CC^n\ge nu$ implies $r_A^n\ge nu$, hence $\ul\Phi_A(u)\ge\Phi_\CC(u)$. Since the large deviations principle holds for the set $\CC$ (see Remark~\ref{rem-ldprectangle}), we have $\Phi_\CC(u)=\inf_\alpha\Psi_\CC(\alpha)-\alpha$ and the conclusion follows from inequality \eqref{eq:infpsi}.
The case $\alpha_*<0$ can be treated in the same way by considering the inner approximation $\Psiin$.

(iii) By (ii) it is enough to consider $u<\lim_{\alpha\to-\infty}\Psi_A'(\alpha)$, but in this case $\inf_\alpha\Psi_A(u)-\alpha u=-\infty$ which implies  the result by \eqref{eq:phiasup}.
\end{proof}

We remark that the exceptional value $\Psi'(-\infty)$ in statement (iii) is the slope of $\Psi_A$ at $-\infty$ which is $\frac{1}{\max(A)}$. At this point the value of $\Phi_A$ jumps from $-\infty$ to finite values. 

Note that if $\lim_{\alpha\to\ol\alpha}\Psi_A'(\alpha)=+\infty$ in Proposition~\ref{pro-inegalout} then
 one gets the Large Deviations Principle and the formula~\eqref{equ-psidonnephi} holds for every $u>\Psi_A'(\ul\alpha+)$.
In Sections~\ref{sec-proof-th1} we will prove under the assumption in statement~\ref{point1} of the theorem the existence of $\Psi_A$ on the interval $(-\infty,\alphain)$. However, we are not able to prove that the limit of the derivative at $\alphain$ is infinite. We doubt that it could be finite when $\alphain<+\infty$.
Still, this is clearly the case whenever $\alphain=+\infty$, which is equivalent to $\min(A)>0$.

Nevertheless, using next proposition which exploits the symmetry of our assumption (since $A$ and $A^c$ share the same boundary), this will be sufficient to get the existence of the rate function $\Phi_A$ on the whole interval (except at the discontinuity).

\begin{proposition}\label{pro-psihalf}
Assume that the Large Deviations Principle of return times into $A^c$ holds for any $u\le\frac{1}{\mu(A^c)}$  with a continuous rate function $\Phi_{A^c}$ (except possibly for $u=\frac{1}{\max(A^c)}$). Then the Large Deviations Principle for return times into $A$ holds for any $u\ge\frac{1}{\mu(A)}$ (except possibly for $u=\frac{1}{\min(A)}$) with a rate function $\Phi_A$ which satisfies
\[
\Phi_A(u)=\left(u-1\right) \Phi_{A^c}\left(\frac{u}{u-1}\right).
\]
In particular, if $\Phi_{A^c}$ is the Legendre transform of the cumulant generating function $\Psi_{A^c}$ then
\[
\Phi_A(u)=\inf_{\alpha<0} \left\{-\alpha u +(u-1)\Psi_{A^c}\left(\alpha\right)\right\}.
\]
\end{proposition}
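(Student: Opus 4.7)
The plan hinges on the elementary duality between return times into $A$ and into $A^c$: in the first $k$ iterates of $\sigma$, every visit falls into exactly one of $A$ or $A^c$. Writing $N_A^k(x)=\#\{1\le j\le k:\sigma^j x\in A\}$, one has $N_A^k+N_{A^c}^k=k$, and the tautology $\{r_A^n>k\}=\{N_A^k\le n-1\}$ translates into the key identity
\[
\{r_A^n\ge k\}=\{r_{A^c}^{k-n}\le k-1\}\quad\text{for all integers }k>n.
\]
This combinatorial identity is the cornerstone of the argument and already explains why the two rate functions must be related by the involution $u\leftrightarrow u/(u-1)$.

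I would then apply the identity with $k=\lceil nu\rceil$ for a prescribed $u>1/\mu(A)$; setting $m_n:=\lceil nu\rceil-n$ and $v_n:=(\lceil nu\rceil-1)/m_n$ one obtains
\[
\mu_\phi(r_A^n\ge nu)=\mu_\phi(r_{A^c}^{m_n}\le m_n v_n),
\]
with $m_n/n\to u-1$ and $v_n\to v:=u/(u-1)$. To pass to the limit, I would exploit the monotonicity of the events $\{r_{A^c}^m\le mv\}$ in $v$ to sandwich $v_n$ between $v\pm\varepsilon$, apply the assumed LDP for $A^c$ at these two values, and let $\varepsilon\to 0^+$ using the continuity of $\Phi_{A^c}$ to conclude
\[
\lim_{n\to\infty}\frac{1}{m_n}\log\mu_\phi(r_{A^c}^{m_n}\le m_n v_n)=\Phi_{A^c}(v).
\]
Multiplying by $m_n/n\to u-1$ yields the announced formula $\Phi_A(u)=(u-1)\Phi_{A^c}(u/(u-1))$.

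Next I would verify that the admissible ranges of $u$ match correctly under the involution: a direct calculation gives that $u\ge 1/\mu(A)$ is equivalent to $u/(u-1)\le 1/\mu(A^c)$, and the excluded $u=1/\min(A)$ corresponds to $u/(u-1)=1/(1-\min(A))=1/\max(A^c)$, which is precisely the point excluded in the hypothesis on $\Phi_{A^c}$. To obtain the Legendre-style formula, I would substitute $\Phi_{A^c}(v)=\inf_\alpha\{\Psi_{A^c}(\alpha)-\alpha v\}$ and distribute the factor $u-1$, giving
\[
\Phi_A(u)=\inf_\alpha\{(u-1)\Psi_{A^c}(\alpha)-\alpha u\}.
\]
For $u>1/\mu(A)$ the target $v=u/(u-1)$ lies strictly below $1/\mu(A^c)=\Psi_{A^c}'(0^+)$, so by convexity of $\Psi_{A^c}$ the infimum is attained at some $\alpha_*<0$ and restricting to $\alpha<0$ is harmless.

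The main obstacle I anticipate is technical rather than conceptual: ensuring that the approximation $v_n\to v$ does not spoil the logarithmic limit. This is precisely where the continuity hypothesis on $\Phi_{A^c}$ (on the whole interval away from the single exceptional point) is indispensable, and it also explains why the conclusion for $A$ inherits exactly one excluded point at $u=1/\min(A)$. Everything else is algebraic manipulation of the duality identity and standard convexity properties of the Legendre transform.
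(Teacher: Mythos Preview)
Your argument is correct and follows essentially the same route as the paper: the paper also uses the counting duality $N_A^k+N_{A^c}^k=k$ to rewrite $\mu_\phi(r_A^n\ge nu)$ as $\mu_\phi\bigl(r_{A^c}^{\lfloor n(u-1)\rfloor}<\lfloor n(u-1)\rfloor\cdot\frac{\lfloor nu\rfloor}{\lfloor n(u-1)\rfloor}\bigr)$ and then takes the limit. Your version is in fact more careful, making explicit the sandwich argument $v_n\to v$ via continuity of $\Phi_{A^c}$, the matching of the admissible ranges under $u\mapsto u/(u-1)$, and the reason the infimum in the Legendre formula may be restricted to $\alpha<0$; the paper's proof leaves all of these implicit.
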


\begin{proof}
Observe that if $u>\frac{1}{\mu(A)}\ge1$ then $r_A^n\ge nu$ 
if and only if the orbit entered at most $n$ times in $A$ before the time $\lfloor nu\rfloor$,
which means that the orbit entered at least $\lfloor n(u-1)\rfloor$ times into $A^c$ before the time $\lfloor nu\rfloor$. Therefore
\[
\frac1n\log\mu_\phi(r_A^n\ge nu) = \frac1n\log\mu_\phi\left(r_{A^c}^{\lfloor n(u-1)\rfloor}< {\lfloor n(u-1)\rfloor}\frac{\lfloor nu\rfloor}{\lfloor n(u-1)\rfloor}\right)
\]
and the result follows by taking the limit as $n\to\infty$.

\end{proof}

%%%%%%%%%%%%%%%%%%%%%%%%%%%%%
%%%%%%%%%%%%%%%%%%%%%%%

%%%%%%%%%%%%%%%%%%%%%%%%%%%%%%%%%%%%%%%%%%%%%%%%%%%%%%%%%%%%%%%%%%%%%%%%%%%%%%%%%%%%%%%%
\section{Coincidence of inner and outer approximation in the case of a small pressure boundary}\label{sec-proof-th2}

The goal of this section is to prove the statement~\ref{point2} of the theorem.
By the previous analysis (See Proposition~\ref{pro-inegalout}) it is sufficient to prove the existence of some interval $(\ul\alpha,\ol\alpha)\ni0$ 
such that for any $\alpha\in(\ul\alpha,\ol\alpha)$ we have $\Psiin(\alpha)=\Psiout(\alpha)$.

\subsection{A more explicit condition to get a small pressure boundary}

Let $K\subset\Sigma$ be a Borel set. We recall our definition of its $\phi$-pressure:
\[
\wt\CP_\phi(K)=\sup \left\{ h_\nu+\int \phi d\nu\colon \nu\text{ ergodic and }\nu(K)>0\right\}.
\]
Note that it is not the same as the one defined as a dimension like characteristic with forward cylinders. That would satisfy us in the case of expanding maps, but for diffeomorphisms it would lead to a condition much too strong. 
\begin{proposition}\label{pro-bdpress}
Let $K\subset\Sigma$ be a Borel set and let $V_n(K)$ be the smallest union of $(-n,n)$-cylinders which contains $K$.
If there exist some constants $c>0$ and $\theta>0$ such that $\mu_\phi( V_n(K) ) \le c e^{-\theta n}$ for all integers $n$, then $\wt\CP_\phi(K)\le \CP_\phi(\Sigma)-\frac12\theta$.
\end{proposition}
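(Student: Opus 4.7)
The plan is to fix an arbitrary ergodic $\sigma$-invariant measure $\nu$ with $\nu(K)>0$ and show the single inequality $h_\nu(\sigma)+\int\phi\,d\nu\le \CP_\phi(\S)-\tfrac12\theta$. Taking the supremum over such $\nu$ then gives the desired bound on $\wt\CP_\phi(K)$.

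The core idea is a double counting on $(-n,n)$-cylinders that meet $K$: the Gibbs property of $\mu_\phi$ bounds $\mu_\phi(V_n(K))$ from below by a weighted sum of exponentials of Birkhoff sums, while the SMB theorem for $\nu$ provides a lower bound on how many of those cylinders are needed to carry a definite mass of $\nu$. First, I would apply Birkhoff's ergodic theorem and the Shannon--McMillan--Breiman theorem to $\nu$, using the generating partition whose $n$-fold two-sided refinement gives the $(-n,n)$-cylinders, to extract, for each small $\eps>0$ and large $n$, a set
\[
K_n\pardef\left\{x\in K:\nu(C_{-n,n}(x))\le e^{-(2n+1)(h_\nu-\eps)}\text{ and }\left|\tfrac{1}{2n+1}S_{2n+1}\phi(\s^{-n}x)-\int\phi\,d\nu\right|\le\eps\right\}
\]
with $\nu(K_n)\ge\nu(K)/2$.

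Next, let $\CI$ index the $(-n,n)$-cylinders $C_i\subset V_n(K)$ that intersect $K_n$, and pick $x_i\in C_i\cap K_n$. Covering $K_n\subset\bigcup_{i\in\CI}C_i$ and using the SMB bound on $\nu(C_i)$ on $K_n$ gives
\[
|\CI|\ge \frac{\nu(K_n)}{\max_i\nu(C_i)}\ge \frac{\nu(K)}{2}e^{(2n+1)(h_\nu-\eps)}.
\]
Using the Birkhoff bound valid on $K_n$, this provides the lower estimate
\[
\sum_{i\in\CI}e^{S_{2n+1}\phi(\s^{-n}x_i)}\ge \frac{\nu(K)}{2}e^{(2n+1)(h_\nu+\int\phi\,d\nu-2\eps)}.
\]
On the other hand, the Gibbs property of $\mu_\phi$ (together with bounded distortion of $\phi$ along a $(-n,n)$-cylinder from H\"older continuity) yields a constant $K_1>0$ such that $\mu_\phi(C_i)\ge K_1 e^{-(2n+1)\CP_\phi(\S)+S_{2n+1}\phi(\s^{-n}x_i)}$ for each $i\in\CI$, hence
\[
ce^{-\theta n}\ge \mu_\phi(V_n(K))\ge K_1 e^{-(2n+1)\CP_\phi(\S)}\sum_{i\in\CI}e^{S_{2n+1}\phi(\s^{-n}x_i)}.
\]
Combining the two displays, taking logarithms, dividing by $n$, letting $n\to\8$ and then $\eps\to 0$ produces $-\theta\ge 2(h_\nu+\int\phi\,d\nu-\CP_\phi(\S))$, which is the desired inequality.

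The routine steps are the two applications of ergodic-theoretic asymptotics; the main technical point that needs care is the Gibbs/distortion estimate for $(-n,n)$-cylinders of the two-sided subshift, so that the potential sums evaluated at $\s^{-n}x_i$ can be compared between the $\mu_\phi$-side and the $\nu$-side using the \emph{same} reference points $x_i\in C_i\cap K_n$. I expect this to be the only delicate part, but it is standard for H\"older potentials and for Gibbs measures of subshifts of finite type, and all the other ingredients are already in place in the paper.
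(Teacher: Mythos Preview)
Your proof is correct and follows essentially the same approach as the paper: compare the $\mu_\phi$-mass of $V_n(K)$ (bounded above by hypothesis, below by the Gibbs property) against the $\nu$-information carried by the covering $(-n,n)$-cylinders via the Shannon--McMillan--Breiman and Birkhoff theorems. The paper's implementation is slightly slicker---it writes $\mu_\phi(V_n(K))\ge\int_K \frac{\mu_\phi(C_{-n,n}(x))}{\nu(C_{-n,n}(x))}\,d\nu(x)$, substitutes the Gibbs estimate, and applies Fatou's lemma directly to the integrand, thereby bypassing your good-set extraction and cylinder counting---but the content is the same.
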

\begin{proof}
Let $\wt S_n\phi(x)=\sum_{k=-n}^{n-1}\phi(x)$ denote the two sided Birkhoff sum and $C_{-n,n}(x)$ the $(-n,n)$-cylinder containing the point $x\in\Sigma$.
Recall that since $\mu_\phi$ is a Gibbs measure, for some constant $b>0$ and for every $x\in\Sigma$ we have
\[
e^{-b} \le \frac{\mu_\phi(C_{-n,n}(x))}{\exp\left(\wt S_n\phi(x)-2n\CP_\phi(\Sigma)\right)} \le e^b.
\]
Let $\nu$ be an ergodic measure such that $\nu(K)>0$. We have
\[
\begin{split}
c &\ge e^{\theta n} \mu_\phi(V_n(K)) \\
& \ge 
\int_K \exp\left(\theta n+\log\frac{\mu_\phi(C_{-n,n}(x))}{\nu(C_{-n,n}(x))}\right)d\nu(x) \\
&=\int_K \exp\left( 2n\left[\frac\theta2+\CP_\phi(\Sigma)+\frac1{2n}\wt S_n\phi(x)-\frac1{2n}\log\nu(C_{-n,n}(x) )\right]\right) d\nu(x).
\end{split}
\]
The Shannon-McMillan-Breiman theorem and the ergodic theorem implies the convergence $\nu$-a.e. of the term into square bracket to the value 
\[
\frac\theta2+\CP_\phi(\Sigma)+h_\nu+\int\phi d\nu,
\]
which cannot be positive according to Fatou's lemma.
\end{proof}

\begin{proof}[Proof of Proposition~\ref{pro-bdpress-mfd}]
Take a Markov partition of sufficiently small diameter and denote by $\pi\colon \Sigma\to M$ the semi-conjugacy. We know that the diameter of the image by $\pi$ of a $(-n,n)$-cylinder goes uniformly to zero at an exponential rate. Thus, setting $A=\pi^{-1}V$, since $\pA\subset \pi^{-1}\partial V$, we get that the $(-n,n)$-cylindrical neighborhood of $\pA$ has a measure exponentially small, therefore Proposition~\ref{pro-bdpress} applies.
\end{proof}
%%%%%%%%%%
\subsection{Coincidence for positive values of $\alpha$}

Lemma \ref{lem-carac-mus} characterizes $\Psi_{\CB_m}$ and $\Psi_{\CC_m}$: as soon as $\Psi_{\CB_m}(\alpha)$ is defined, $\Psi_{\CB_m}(\alpha)$ is the unique real number $t=t(\alpha, m)$ such that the topological pressure associated to the potential $\phi-t\BBone_{\CB_m}$, $\CP_{\phi-t\BBone_{\CB_m}}$, equals $\CP_\phi(\S)-\alpha$.

Similarly, $\Psi_{\CC_m}(\alpha)$ is the unique real number $t=t(\alpha, m)$ such that the topological pressure associated to the potential $\phi-t\BBone_{\CC_m}$, $\CP_{\phi-t\BBone_{\CC_m}}$, equals $\CP_\phi(\S)-\alpha$.

Let us pick some $\alpha>0$. We denote by $m_{\CB_m,\alpha}$ the measure $m_S$ obtained when we have $R=\CB_m$ and $S=\CP_{\phi}(\S)-\alpha$ in subsection \ref{subsec-thermo-loc}. This measure is the unique equilibrium state associated to the potential $\phi-\Psi_{\CB_m}(\alpha) \BBone_{\CB_m}$. The measure weights $\CB_m$, hence $\CC_m$ and we can take the induced measure on $\CC_m$. Therefore we have (omitting the subscribe $m$ for convenience)
\begin{eqnarray*}
h_{m_{\CB,\alpha}}(f)+\int\phi-\Psi_\CB(\alpha)\,dm_{\CB,\alpha}&=&\CP_\phi(\S)-\alpha\\
h_{m_{\CB,\alpha}}(f)+\int\phi-(\CP_\phi(\S)-\alpha)\,dm_{\CB,\alpha}&=&m_{\CB,\alpha}(\CB)\Psi_\CB(\alpha)\\
m_{\CB,\alpha}(\CC)\left(h_{\mu_{\CB,\alpha,\CC}}(g_\CC)+\int S_{r_C}(\phi-(\CP_\phi(\S)-\alpha))d\mu_{\CB,\alpha,\CC}\right)&=&m_{\CB,\alpha}(\CB)\Psi_\CB(\alpha),
\end{eqnarray*}
where $\mu_{\CB,\alpha,\CC}$ is the conditional measure $m_{\CB,\alpha}{}_{|\CC}$
and $g_\CC$ is the first return map on $\CC$.
This measure has a pressure in $\CC$ lower than $\Psi_\CC(\alpha)$; we thus get 
\begin{equation}\label{equ-mino-psisss}
\frac{m_{\CB_m,\alpha}(\CB_m)}{m_{\CB_m,\alpha}(\CC_m)}\Psi_{\CB_m}(\alpha)\leq \Psi_{\CC_m}(\alpha).
\end{equation}

Recall that for positive $\alpha$, $0<\Psi_{\CC_m}(\alpha)\le\Psi_{\CB_m}(\alpha)$ and are upper bounded (uniformly in every compact set in $[0,\alphain[$). 

\begin{proposition}\label{prop-keyalpha1}
There exists some $\ol\alpha>0$ such that for every $\alpha\in(0,\ol\alpha)$,
$$\lim_{m\rightarrow\pinf}\frac{m_{\CB_m,\alpha}(\CD_m)}{m_{\CB_m,\alpha}(\CC_m)}=0.$$
In particular, $\Psiin=\Psiout$ on this interval.
\end{proposition}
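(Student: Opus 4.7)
The plan is to reduce the claim to a measure-theoretic estimate on the sequence of equilibrium states $m_{\CB_m,\alpha}$ characterized by Lemma~\ref{lem-carac-mus}, and to exploit the pressure gap $\wt\CP_\phi(\pA)<\CP_\phi(\S)$ provided by the hypothesis of the theorem. Combining \eqref{equ-mino-psisss} with the inequality $\Psi_{\CC_m}\le\Psi_{\CB_m}$ from Remark~\ref{rem1-ineg-psi} yields
$$0\le \Psi_{\CB_m}(\alpha)-\Psi_{\CC_m}(\alpha)\le \Psi_{\CB_m}(\alpha)\cdot \frac{m_{\CB_m,\alpha}(\CD_m)}{m_{\CB_m,\alpha}(\CC_m)},$$
so the conclusion $\Psiin=\Psiout$ on $(0,\ol\alpha)$ will follow from vanishing of this ratio.

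Fix $\eta>0$ such that $\wt\CP_\phi(\pA)\le \CP_\phi(\S)-2\eta$. From Lemma~\ref{lem-carac-mus} and the non-negativity of $\Psi_{\CB_m}(\alpha)\,m_{\CB_m,\alpha}(\CB_m)$ for $\alpha\ge 0$,
$$h_{m_{\CB_m,\alpha}}(\s)+\int\phi\,dm_{\CB_m,\alpha}\ge \CP_\phi(\S)-\alpha.$$
I would then let $\nu$ be a weak-$*$ accumulation point of $(m_{\CB_m,\alpha})_m$ along a subsequence $m_k\to\infty$. Upper semi-continuity of the entropy on the expansive SFT combined with continuity of $\phi$ preserves this inequality in the limit: $h_\nu+\int\phi\,d\nu\ge \CP_\phi(\S)-\alpha$. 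Decomposing $\nu$ into its ergodic components and using the pressure bound $h_{\nu_\omega}+\int\phi\,d\nu_\omega\le \CP_\phi(\S)-2\eta$ for any ergodic $\nu_\omega$ with $\nu_\omega(\pA)>0$, a weighted sum yields $h_\nu+\int\phi\,d\nu\le \CP_\phi(\S)-2\eta\,\nu(\pA)$, and comparing with the lower bound, $\nu(\pA)\le \alpha/(2\eta)$. Since each $\CD_m$ is clopen and $\CD_m\searrow \pA$, Portmanteau applied to $\CD_{m_0}$ with $\CD_{m_k}\subset \CD_{m_0}$ for large $k$, followed by $m_0\to\infty$, gives
$$\limsup_m m_{\CB_m,\alpha}(\CD_m)\le \nu(\pA)\le \frac{\alpha}{2\eta}.$$
A parallel Portmanteau argument on the open sets $\CB_{m_0}\nearrow A^\circ$, together with $\nu(A^\circ)\ge \nu(A)-\nu(\pA)$, will provide a lower bound $\liminf_m m_{\CB_m,\alpha}(\CC_m)\ge c>0$ once $\alpha$ is sufficiently small.

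The hardest step, and the main obstacle I foresee, is to upgrade this crude $O(\alpha)$ bound on the ratio to genuine vanishing as $m\to\infty$ for each fixed $\alpha\in(0,\ol\alpha)$, which is what the statement asserts. I would proceed by a bootstrap: the preliminary bound already implies $\limsup_m(\Psi_{\CB_m}(\alpha)-\Psi_{\CC_m}(\alpha))=O(\alpha)\Psiin(\alpha)$, so $\Psiin$ and $\Psiout$ agree to first order; feeding this back into the pressure equation together with the symmetric inequality obtained by exchanging $\CB_m$ with $\CC_m$ and using $m_{\CC_m,\alpha}$ should iteratively tighten the estimate on $\nu(\pA)$ down to zero, forcing the ratio itself to vanish. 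The threshold $\ol\alpha>0$ is then determined quantitatively by the pressure gap $\eta$ and by $\mu_\phi(A)$.
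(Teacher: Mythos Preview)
Your overall architecture matches the paper's: reduce to \eqref{equ-mino-psisss}, pass to a weak-$*$ limit $\nu$ of the equilibrium states $m_{\CB_m,\alpha}$, and use the pressure gap on $\pA$ together with upper semi-continuity of entropy. The reduction inequality
\[
0\le \Psi_{\CB_m}(\alpha)-\Psi_{\CC_m}(\alpha)\le \Psi_{\CB_m}(\alpha)\,\frac{m_{\CB_m,\alpha}(\CD_m)}{m_{\CB_m,\alpha}(\CC_m)}
\]
is correct, and your Portmanteau steps on the clopen sets $\CD_{m_0}$ and $\CB_{m_0}$ are fine.

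The gap is exactly where you flag it: you only obtain $\nu(\pA)\le \alpha/(2\eta)$, and the ``bootstrap'' you sketch does not close. Iterating the estimate $\Psi_{\CB_m}-\Psi_{\CC_m}=O(\alpha)\Psiin(\alpha)$ back through the pressure relation gives you no new information on $\nu(\pA)$, because your bound on $\nu(\pA)$ came from the $\phi$-pressure of $\nu$ alone, and that quantity is insensitive to how well $\Psiin$ and $\Psiout$ agree. The symmetric inequality with $m_{\CC_m,\alpha}$ concerns a \emph{different} family of measures and does not tighten the estimate on the same $\nu$.

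What the paper does (Lemma~\ref{lem-mesbordnul}) is use not the $\phi$-pressure but the pressure for the \emph{modified} potential $\phi-\Psi_{\CB_j}(\alpha)\BBone_{\CB_j}$, for which $m_{\CB_j,\alpha}$ is the unique equilibrium state with pressure exactly $\CP_\phi(\S)-\alpha$ (Lemma~\ref{lem-carac-mus}). Assuming $\nu(\pA)>0$, decompose $\nu=p\nu_0+q\nu_1$ with $\nu_0$ the conditional on the invariant hull of $\pA$. Passing the pressure inequality $h_{\mu_m}+\int\phi\,d\mu_m-\Psi_{\CB_m}(\alpha)\mu_m(\CB_j)\ge \CP_\phi(\S)-\alpha$ to the limit and combining with $h_{\nu_0}+\int\phi\,d\nu_0\le \CP_\phi(\S)-\ol\alpha$ forces, for $j$ large,
\[
h_{\nu_1}+\int\phi\,d\nu_1-\Psi_{\CB_j}(\alpha)\nu_1(\CB_j)>\CP_\phi(\S)-\alpha,
\]
i.e.\ $\nu_1$ has strictly larger $(\phi-\Psi_{\CB_j}(\alpha)\BBone_{\CB_j})$-pressure than the equilibrium state, a contradiction. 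This gives $\nu(\pA)=0$ outright for every $\alpha<\ol\alpha:=\min(\CP_\phi(\S)-\wt\CP_\phi(\pA),\alphain)$, with no bootstrap needed. The positivity of the denominator (Lemma~\ref{lem-meslimcposi}) is handled by a separate argument of the same flavor: if $\nu(\CB_j)=0$ for all $j$ then $\nu$ lives in every $\S_{\CB_j}$, forcing $P_\nu(\phi)\le \CP_\phi(\S)-\alphain<\CP_\phi(\S)-\alpha$, contradicting your lower bound.
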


The proof of the proposition is an immediate consequence of these two lemmas and Inequality~\eqref{equ-mino-psisss}.
\begin{lemma}\label{lem-meslimcposi}
For any $\alpha\in(0,\alphain)$ we have $\disp\liminf_{m\rightarrow\pinf}{m_{\CB_m,\alpha}(\CC_m)}>0$
\end{lemma}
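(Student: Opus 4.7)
Since $\CB_m\subset\CC_m$, it is enough to show $\liminf_{m\to\pinf}m_{\CB_m,\alpha}(\CB_m)>0$. My plan is to derive the identity
\[
m_{\CB_m,\alpha}(\CB_m)=\frac{1}{\Psi_{\CB_m}'(\alpha)}
\]
and then to bound $\Psi_{\CB_m}'(\alpha)$ from above uniformly in $m$ by convexity of $\Psi_{\CB_m}$.

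For the identity, I would revisit the function $t\mapsto P_m(t):=\CP_{\phi-t\BBone_{\CB_m}}(\S)$. Since $\BBone_{\CB_m}$ is locally constant hence H\"older (for each fixed $m$), the standard thermodynamic formalism applies: $P_m$ is analytic and convex with $P_m'(t)=-\mu_t(\CB_m)$, where $\mu_t$ denotes the unique equilibrium state of $\phi-t\BBone_{\CB_m}$. By Lemma~\ref{lem-carac-mus}, $m_{\CB_m,\alpha}=\mu_{\Psi_{\CB_m}(\alpha)}$, and by the very definition of $\Psi_{\CB_m}$ one has $P_m(\Psi_{\CB_m}(\alpha))=\CP_\phi(\S)-\alpha$. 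Differentiating this last relation in $\alpha$ produces the identity.

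To bound $\Psi_{\CB_m}'(\alpha)$ uniformly, I fix any $h>0$ small enough that $\alpha+h<\alphain$. Since $\Psi_{\CB_m}$ is convex on its domain and $\Psi_{\CB_m}(\alpha)\ge\Psi_{\CB_m}(0)=0$ (recall that $\Psi_{\CB_m}$ is non-decreasing on $[0,\alpha(\CB_m))$), convexity yields
\[
\Psi_{\CB_m}'(\alpha)\le\frac{\Psi_{\CB_m}(\alpha+h)-\Psi_{\CB_m}(\alpha)}{h}\le\frac{\Psi_{\CB_m}(\alpha+h)}{h}.
\]
From the analysis of the preceding subsection, $\Psi_{\CB_m}(\alpha+h)\to\Psiin(\alpha+h)<+\8$, so this quantity is bounded by some $C(h)$ for all $m$ sufficiently large, whence $m_{\CB_m,\alpha}(\CB_m)\ge 1/C(h)$ for those $m$, and the lemma follows.

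The main obstacle I foresee is the identity $m_{\CB_m,\alpha}(\CB_m)=1/\Psi_{\CB_m}'(\alpha)$: although it follows from a routine envelope/pressure differentiation argument, one must carefully tie together Lemma~\ref{lem-carac-mus}, the analyticity of $S\mapsto\log\lambda_S$ on $]S_c,+\8[$ recalled at the start of Section~\ref{sec-CB-CC}, and the formula $P_m'(t)=-\mu_t(\CB_m)$. Once that bridge is in place, the convexity argument relies only on the qualitative pointwise convergence $\Psi_{\CB_m}\to\Psiin$ already established, which sidesteps the issue that the H\"older norm of $\BBone_{\CB_m}$ blows up with $m$.
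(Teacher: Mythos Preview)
Your proof is correct and takes a genuinely different route from the paper. The paper argues by compactness: it takes a weak-$*$ accumulation point $\nu$ of the sequence $(m_{\CB_m,\alpha})$, uses upper semi-continuity of entropy to pass the equilibrium identity $h_{\mu_m}+\int\phi\,d\mu_m-\Psi_{\CB_m}(\alpha)\mu_m(\CB_m)=\CP_\phi(\S)-\alpha$ to the limit, and then shows that if $\liminf_m m_{\CB_m,\alpha}(\CB_m)=0$ the limit measure $\nu$ would have to live on the dotted system $\S_{\CB_j}$ for every $j$, forcing $\CP_\nu(\phi)\le\CP_\phi(\S)-\alphain<\CP_\phi(\S)-\alpha$, a contradiction. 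Your argument is more direct and analytic: the pressure-derivative identity $m_{\CB_m,\alpha}(\CB_m)=1/\Psi_{\CB_m}'(\alpha)$ reduces the question to a uniform upper bound on the derivatives, which falls out immediately from convexity together with the already-established monotone convergence $\Psi_{\CB_m}\downarrow\Psiin$ on $[0,\alphain)$. This sidesteps compactness and semi-continuity entirely and is arguably cleaner for this lemma in isolation. On the other hand, the paper's compactness framework---in particular the accumulation measure $\nu$---is set up so that it can be reused verbatim in the next lemma (Lemma~\ref{lem-mesbordnul}), where one must show $\nu(\pA)=0$; your approach, while perfectly adequate here, does not feed into that argument.
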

\begin{proof}
Let us denote by $\mu_m$ the measure $m_{\CB_m,\alpha}$, and pick any accumulation point $\nu$ of $(\mu_m)$ such that $\mu_m(\CB_m)$ converges (up to the correct subsequence) to $L:=\disp\liminf_{m\rightarrow\pinf}{\mu_m(\CB_m)}$. Let us show that $L>0$ whenever $\alpha<\alphain$.

Since $\mu_m$ is an equilibrium state we have 
\begin{equation}
h_{\mu_m}+\int \phi\,d\mu_m-\Psi_{\CB_m}(\alpha)\mu_m(\CB_m)=\CP_\phi(\S)-\alpha.
\label{equ-press}
\end{equation}
By semi-continuity for the metric entropy and the continuity of $\phi$ 
we obtain 
\begin{equation}
\CP_\nu(\phi)=h_\nu+\int\phi d\nu
\geq \CP_\phi(\S)-\alpha+\Psiin(\alpha)L.\label{equ-minopress}
\end{equation}
If $\nu(\CB_j)=0$ then this yields that $\nu$ is a $\s$-invariant measure for the dotted system $\S_{\CB_j}$. Hence, its $\phi$-pressure must be smaller than $\CP_\phi(\S_{\CB_j})$, which is by definition $\CP_\phi(\S)-\alpha(\CB_j)$. If this holds for every $j$ then $\CP_\nu(\phi)\leq \CP_\phi(\S)-\alphain$ (remember that $(\alpha(\CB_j))$ converges to $\alphain$). 
On the other hand by \eqref{equ-minopress} we had $\CP_\nu(\phi)\geq \CP_\phi(\S)-\alpha$, and $\alpha<\alphain$. This yields a contradiction. Therefore $\nu(\CB_j)>0$ for some $j$. Additionally, whenever $m\ge j$ we get $\mu_m(\CB_m)\ge\mu_m(\CB_j)$, and the later converges to $\nu(\CB_j)$ by continuity of $\BBone_{\CB_j}$.
This achieves the proof of the lemma since $\CC_m\supset \CB_m$ for any $m$.
\end{proof}

\begin{lemma}\label{lem-mesbordnul}
Let us set $\ol\alpha:=\min(\CP_\phi(\S)-\wt\CP_\phi(\pA),\alphain)>0$. For every $\alpha\in(0,\ol\alpha)$ we have $\disp\lim_{m\rightarrow\pinf}{m_{\CB_m,\alpha}(\CD_m)}=0$.
\end{lemma}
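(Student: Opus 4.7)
The plan is to argue by contradiction. If $\mu_m := m_{\CB_m,\alpha}$ does not satisfy $\mu_m(\CD_m) \to 0$, by weak-$*$ compactness of the space of invariant probability measures I would extract a subsequence (still indexed by $m$) along which $\mu_m \to \nu$ weakly, $\mu_m(\CB_m) \to L$, and $\mu_m(\CD_m) \to \ell > 0$. Since each $\CD_j$ is clopen and $\CD_m \subset \CD_j$ for $m \geq j$, weak convergence gives $\limsup_m \mu_m(\CD_m) \leq \nu(\CD_j)$; letting $j \to \infty$ with $\CD_j \downarrow \pA$ yields $\ell \leq \nu(\pA)$. A dual sandwich using the clopen sets $\CB_j$ with $\CB_j \uparrow A^\circ := \bigcup_m \CB_m$ yields $L \geq \nu(A^\circ)$.

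Next I would transport the equilibrium state property to the limit. By Lemma~\ref{lem-carac-mus}, for every invariant probability $\lambda$,
\[
h_\lambda + \int \phi \, d\lambda - \Psi_{\CB_m}(\alpha)\,\lambda(\CB_m) \leq \CP_\phi(\S) - \alpha,
\]
with equality at $\lambda = \mu_m$. Since $\Psi_{\CB_m}(\alpha) \downarrow \Psiin(\alpha)$ and $\lambda(\CB_m) \uparrow \lambda(A^\circ)$, passing to $m \to \infty$ shows that the affine functional
\[
F(\lambda) := h_\lambda + \int \phi \, d\lambda - \Psiin(\alpha)\,\lambda(A^\circ)
\]
satisfies $F \leq \CP_\phi(\S) - \alpha$ on every invariant probability. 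The equality at $\mu_m$ reads $h_{\mu_m} + \int \phi \, d\mu_m = \CP_\phi(\S) - \alpha + \Psi_{\CB_m}(\alpha)\,\mu_m(\CB_m)$; combined with upper semicontinuity of entropy on $\mu_m \to \nu$ this gives $h_\nu + \int \phi \, d\nu \geq \CP_\phi(\S) - \alpha + \Psiin(\alpha) L$, and using $L \geq \nu(A^\circ)$ together with $\Psiin(\alpha) \geq 0$ I would conclude $F(\nu) = \CP_\phi(\S) - \alpha$: the weak limit $\nu$ attains the supremum of $F$.

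To conclude I would invoke the ergodic decomposition $\nu = \int \nu^\xi \, d\nu(\xi)$. Since $F$ is affine, $F(\nu) = \int F(\nu^\xi)\, d\nu(\xi)$, and combined with the pointwise bound $F(\nu^\xi) \leq \CP_\phi(\S) - \alpha$ this forces $F(\nu^\xi) = \CP_\phi(\S) - \alpha$ for $\nu$-almost every $\xi$. On the other hand, if $\nu^\xi$ is ergodic with $\nu^\xi(\pA) > 0$ then the definition of $\wt\CP_\phi(\pA)$ together with $\Psiin(\alpha)\nu^\xi(A^\circ) \geq 0$ give
\[
F(\nu^\xi) \leq h_{\nu^\xi} + \int \phi \, d\nu^\xi \leq \wt\CP_\phi(\pA) < \CP_\phi(\S) - \alpha,
\]
the last inequality from $\alpha < \ol\alpha \leq \CP_\phi(\S) - \wt\CP_\phi(\pA)$. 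This contradicts $F(\nu^\xi) = \CP_\phi(\S) - \alpha$, so $\nu^\xi(\pA) = 0$ for $\nu$-a.e. $\xi$; hence $\nu(\pA) = 0$ and $\ell = 0$, the desired contradiction.

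I expect the delicate step to be the identity $F(\nu) = \CP_\phi(\S) - \alpha$: both the potential $\phi - \Psi_{\CB_m}(\alpha)\BBone_{\CB_m}$ and the distinguished set $\CB_m$ vary with $m$, and the indicator $\BBone_{\CB_m}$ is not continuous in the limit. The sandwich bound $L \geq \nu(A^\circ)$ paired with the sign $\Psiin(\alpha) \geq 0$ is exactly what closes the gap between the a priori lower bound $h_\nu + \int \phi \, d\nu \geq \CP_\phi(\S) - \alpha + \Psiin(\alpha) L$ and the required equality.
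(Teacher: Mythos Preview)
Your argument is correct and close in spirit to the paper's, but the mechanism for the contradiction is organized differently. The paper, after reaching the same limit measure $\nu$, decomposes $\nu=p\nu_0+q\nu_1$ according to the invariant hull $H=\bigcup_n \sigma^{-n}\pA$; it bounds $h_{\nu_0}+\int\phi\,d\nu_0\le\wt\CP_\phi(\pA)$, and then shows that the complementary piece $\nu_1$ would satisfy $h_{\nu_1}+\int\phi\,d\nu_1-\Psi_{\CB_j}(\alpha)\nu_1(\CB_j)>\CP_\phi(\S)-\alpha$ for some \emph{finite} level $j$, contradicting the variational characterization of the equilibrium state for $\phi-\Psi_{\CB_j}(\alpha)\BBone_{\CB_j}$. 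You instead pass fully to the limit, introducing the affine functional $F(\lambda)=h_\lambda+\int\phi\,d\lambda-\Psiin(\alpha)\lambda(A^\circ)$, show $F\le\CP_\phi(\S)-\alpha$ globally with equality at $\nu$, and then use the ergodic decomposition together with affineness of $F$ to force $F(\nu^\xi)=\CP_\phi(\S)-\alpha$ for a.e.\ ergodic component, which is incompatible with $\nu^\xi(\pA)>0$.

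Your route is slightly cleaner in that it avoids returning to a finite index $j$ for the contradiction; the price is that you must check that $F$ really disintegrates (Jacobs' theorem for entropy plus measurability of $A^\circ=\bigcup_m\CB_m$), which you implicitly use. The sandwich $L\ge\nu(A^\circ)$ combined with $\Psiin(\alpha)\ge0$ is indeed the point that turns the a priori inequality into the equality $F(\nu)=\CP_\phi(\S)-\alpha$, and you identified it correctly. The paper's approach, by staying at finite level for the contradiction, is perhaps more robust if one were unsure about the limiting variational principle, but in the present setting both arguments are complete.
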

\begin{proof}
Let us fix some $\alpha\in(0,\ol\alpha)$.
Let us pick any accumulation point $\nu$ for the sequence of measures $\mu_m$ (we keep the notation of the preceding lemma). We claim that $\nu(\pA)=0$.

Assume for a contradiction that $\nu(\pA)>0$. Then let $H=\cup_{n\in\mathbb{Z}}f^{-n}\pA$ be the invariant hull of $\pA$.
Let $\nu_0$ and $\nu_1$ be the conditional measures of $\nu$ on $H$ and $\Sigma\setminus H$.
These two invariant probabilities are such that $\nu=p\nu_0+q\nu_1$ for some $p>0$.
Observe that by definition, any ergodic component of $\nu_0$ gives mass to $H$. Therefore even if $\nu_0$ is not ergodic, since the entropy is affine we still get that  
\begin{equation}\label{equ2-bordpetitepression}
 h_{\nu_0}+\int\phi\,d\nu_0\leq \wt\CP_\phi(\pA) = \CP_\phi(\S)-\ol\alpha.
\end{equation}

Copying the equality (\ref{equ-press}), we get for every integers $m\ge j$ that
\[
h_{\mu_m}+\int \phi d\mu_m -\Psi_{\CB_m}(\alpha)\mu_m(\CB_j) \ge \CP_\phi(\Sigma)-\alpha.
\]
Thus letting $m\to\infty$ gives, since the entropy is semi-continuous and affine, 
\begin{equation}\label{equ3-bordpetitepression}
p\left(h_{\nu_0}+\int\phi\,d\nu_0-\Psiin(\alpha)\nu_0(\CB_j)\right)+q\left(h_{\nu_1}+\int\phi\,d\nu_1-\Psiin(\alpha)\nu_1(\CB_j)\right)\geq \CP_\phi(\S)-\alpha,
\end{equation}
Hence (\ref{equ2-bordpetitepression}) and (\ref{equ3-bordpetitepression}) yield that for every $j$
$$h_{\nu_1}+\int\phi\,d\nu_1-\Psiin(\alpha)\nu_1(\CB_j)\geq \CP_\phi(\S)-\alpha+\frac{p}{q}(\ol\alpha-\alpha).$$
We now choose $j$ large enough such that 
$$h_{\nu_1}+\int\phi\,d\nu_1-\Psi_{\CB_j}(\alpha)\nu_1(\CB_j)> \CP_\phi(\S)-\alpha$$
holds. This is a contradiction because the measure $\nu_1$ would have a $\phi-\Psi_{\CB_j}\BBone_{\CB_j}$-pressure strictly larger than the associated equilibrium state. Thus we have $\nu(\pA)=0$.

To finish the proof let us fix some $\eps>0$ and consider any $j$ such  that $\nu(\CD_j)<\eps$. Such an integer $j$ exists by outer regularity of the measure $\nu$ and because $\pA=\disp\bigcap{\downarrow}\CD_n$. Note that $\BBone_{\CD_j}$ is continuous. Now, for any $m\geq j$ we have $\CD_m\subset\CD_j$, and then we get
$$0\leq \limsup_m \mu_m(\CD_m)\leq \nu(\CD_j)<\eps.$$
This holds for every positive $\eps$, which proves the lemma.
\end{proof}

\begin{remark}\label{rem-alpha1}
We remark that under the assumption in statement~\ref{point1} of the theorem, we always have $\nu(A)=0$ for the measure $\nu$ constructed in Lemma~\ref{lem-mesbordnul},
therefore $\ol\alpha=\alphain$.
\end{remark}

%%%%%%%%%%%%%%%%%%%%%%%%%%%%%%%
\subsection{Coincidence for negative values of $\alpha$}

We remark that the measure $m_{\CC,\alpha}$ is a Gibbs measure with full topological support, thus it gives weight to $\CB$. Therefore we can copy the case $\alpha$ positive  and induce on $\CB$ (instead of $\CC$); we get similarly
\begin{equation}\label{equ-mino-psisss2}
\frac{m_{\CC_m,\alpha}(\CC_m)}{m_{\CC_m,\alpha}(\CB_m)}\Psi_{\CC_m}(\alpha)\leq \Psi_{\CB_m}(\alpha).
\end{equation}

\begin{proposition}\label{prop-keyalpha12}
There exists some real $\ul\alpha<0$ such that for every $\alpha\in(\ul\alpha,0)$,
$$\lim_{m\rightarrow\pinf}\frac{m_{\CC_m,\alpha}(\CD_m)}{m_{\CC_m,\alpha}(\CC_m)}=0.$$
In particular, $\Psiin=\Psiout$ on this interval.
\end{proposition}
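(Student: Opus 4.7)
The plan is to mirror the argument of Proposition~\ref{prop-keyalpha1}, with the roles of $\CB_m$ and $\CC_m$ swapped and the signs of several inequalities reversed. Once I know that $m_{\CC_m,\alpha}(\CD_m)/m_{\CC_m,\alpha}(\CC_m)\to 0$, the ratio $m_{\CC_m,\alpha}(\CC_m)/m_{\CC_m,\alpha}(\CB_m)$ tends to $1$; plugging this into~\eqref{equ-mino-psisss2} (both sides of which are nonpositive, and $\Psi_{\CC_m}(\alpha)$ bounded in absolute value) yields $\Psi_{\CB_m}(\alpha)-\Psi_{\CC_m}(\alpha)\to 0$, whence $\Psiin=\Psiout$ on the prescribed interval.

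To prove the mass ratio tends to zero I would fix a subsequence along which $\mu_m := m_{\CC_m,\alpha}$ converges weakly to a $\s$-invariant probability $\nu$, and all the relevant scalar quantities converge. Starting from the pressure equation for $\mu_m$ and using $\mu_m(\CC_m)\leq \mu_m(\CC_j)$ for $m\geq j$ (since $(\CC_m)$ is decreasing) together with $\Psi_{\CC_m}(\alpha)\leq 0$, I obtain
\[
h_{\mu_m}+\int\phi\,d\mu_m-\Psi_{\CC_m}(\alpha)\mu_m(\CC_j)\geq \CP_\phi(\S)-\alpha.
\]
Upper semicontinuity of entropy and letting $j\to\infty$ (so that $\nu(\CC_j)\downarrow\nu(\ol A)$) then give $\CP_\nu(\phi)-\Psiout(\alpha)\nu(\ol A)\geq \CP_\phi(\S)-\alpha$. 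Since $\CP_\nu(\phi)\leq \CP_\phi(\S)$ and $\alpha<0$, this forces both $\Psiout(\alpha)<0$ and $\nu(\ol A)>0$. The mass lower bound (the analogue of Lemma~\ref{lem-meslimcposi}) is thus nearly automatic here, in contrast with the positive case.

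The substantive step is the analogue of Lemma~\ref{lem-mesbordnul}: showing $\nu(\pA)=0$ for $|\alpha|$ sufficiently small. Set $\ol\alpha_0:=\CP_\phi(\S)-\wt\CP_\phi(\pA)>0$ and, by continuity of $\Psiout$ at $0$ with $\Psiout(0)=0$, choose $\ul\alpha<0$ so that $\alpha-\Psiout(\alpha)<\ol\alpha_0$ for every $\alpha\in(\ul\alpha,0)$. Assume for contradiction that $\nu(\pA)>0$, and decompose $\nu=p\nu_0+q\nu_1$ along the invariant hull $H$ of $\pA$, with $p>0$ and $\CP_{\nu_0}(\phi)\leq \CP_\phi(\S)-\ol\alpha_0$, exactly as in the proof of Lemma~\ref{lem-mesbordnul}. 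Substituting this bound and the crude estimate $\nu_0(\CC_j)\leq 1$ into the displayed inequality yields
\[
\CP_{\nu_1}(\phi)-\Psiout(\alpha)\nu_1(\CC_j)\geq \CP_\phi(\S)-\alpha+\tfrac{p}{q}\bigl(\ol\alpha_0-(\alpha-\Psiout(\alpha))\bigr),
\]
where the slack in parentheses is strictly positive by choice of $\ul\alpha$. Using the uniform convergence $\Psi_{\CC_j}(\alpha)\to\Psiout(\alpha)$ to absorb the error when replacing $\Psiout(\alpha)$ by $\Psi_{\CC_j}(\alpha)$ for large $j$, one obtains the strict inequality $\CP_{\nu_1}(\phi)-\Psi_{\CC_j}(\alpha)\nu_1(\CC_j)>\CP_\phi(\S)-\alpha$; but by Lemma~\ref{lem-carac-mus}, $\CP_\phi(\S)-\alpha$ is exactly the topological pressure of $\phi-\Psi_{\CC_j}(\alpha)\BBone_{\CC_j}$, contradicting the variational principle applied to $\nu_1$.

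Granted $\nu(\pA)=0$, the inclusions $\CD_m\downarrow \pA$ and $\CB_m\uparrow\mathrm{int}(A)$ together with the continuity of the indicators of finite unions of cylinders give $\limsup_m\mu_m(\CD_m)\leq \nu(\pA)=0$ and $\liminf_m\mu_m(\CB_m)\geq \nu(\mathrm{int}(A))=\nu(\ol A)>0$; hence $\mu_m(\CD_m)/\mu_m(\CC_m)\to 0$ along the subsequence. Since every subsequence has a further subsequence with this property, the full limit is $0$, completing the proof via the first paragraph. I expect the main obstacle to be, as in the positive case, the calibration of $\ul\alpha$ so that the slack $\ol\alpha_0-(\alpha-\Psiout(\alpha))$ stays strictly positive and thereby absorbs the approximation error in the final contradiction.
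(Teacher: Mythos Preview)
Your proposal is correct and follows essentially the same approach as the paper's proof, which likewise splits into the analogues of Lemmas~\ref{lem-meslimcposi2} and~\ref{lem-mesbordnul2} and derives the contradiction from the variational principle applied to $\nu_1$ for the potential $\phi-\Psi_{\CC_j}(\alpha)\BBone_{\CC_j}$. The only cosmetic differences are that the paper bounds $\nu_0(\CC_j)$ by $\max(\ol A)$ rather than by $1$, and obtains the denominator bound $\liminf_m m_{\CC_m,\alpha}(\CC_m)>0$ directly (without first passing through $\nu(\pA)=0$); your cruder estimate and slightly different ordering work equally well.
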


The proof of the proposition is an immediate consequence of these two lemmas and Inequality~\eqref{equ-mino-psisss2}.

\begin{lemma}\label{lem-meslimcposi2}
For any negative $\alpha$ we have $\disp\liminf_{m\rightarrow\pinf}{m_{\CC_m,\alpha}(\CC_m)}>0$.
\end{lemma}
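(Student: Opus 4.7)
The plan is to mimic the argument of Lemma~\ref{lem-meslimcposi}, but the task should actually be \emph{easier} in this regime: for $\alpha<0$ the slack $-\alpha>0$ in the pressure equation will produce an immediate contradiction with the variational principle, with no need to invoke a dotted subsystem. Specifically, I would set $\mu_m:=m_{\CC_m,\alpha}$, assume for contradiction that $\mu_m(\CC_m)\to 0$ along some subsequence, and extract a further subsequence (by weak-$*$ compactness of the set of $\s$-invariant probability measures) converging to some $\s$-invariant probability $\nu$.

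The identity to exploit, coming from Lemma~\ref{lem-carac-mus}, is
\[
h_{\mu_m}+\int\phi\,d\mu_m-\Psi_{\CC_m}(\alpha)\,\mu_m(\CC_m)=\CP_\phi(\S)-\alpha.
\]
I would then take the $\limsup$ in $m$. Upper semi-continuity of the metric entropy on the shift and continuity of $\phi$ yield $h_\nu+\int\phi\,d\nu\ge \limsup_m\bigl(h_{\mu_m}+\int\phi\,d\mu_m\bigr)$. To handle the third term I would note that by Remark~\ref{rem1-ineg-psi} applied to the nesting $\CC_{m+1}\subset\CC_m$, the sequence $(\Psi_{\CC_m}(\alpha))_m$ is monotone in $m$ for $\alpha<0$ and sits in $[\Psi_{\CB_1}(\alpha),0]$, hence uniformly bounded. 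Combined with the contradiction hypothesis $\mu_m(\CC_m)\to 0$, this gives $\Psi_{\CC_m}(\alpha)\mu_m(\CC_m)\to 0$, so that
\[
h_\nu+\int\phi\,d\nu\ge \CP_\phi(\S)-\alpha.
\]
Since $\alpha<0$, the right-hand side strictly exceeds $\CP_\phi(\S)$, contradicting the variational principle; this forces $\liminf_m\mu_m(\CC_m)>0$.

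I do not anticipate any serious obstacle here. Unlike in Lemma~\ref{lem-meslimcposi}, I will not have to exclude the possibility that $\nu$ is supported in a dotted subsystem, because the surplus of pressure on the right-hand side is already incompatible with \emph{any} invariant measure. The only cosmetic point to double-check is the uniform boundedness of $\Psi_{\CC_m}(\alpha)$, and this is immediate from the monotonicity provided by Remark~\ref{rem1-ineg-psi} together with the fact that $\Psi_{\CC_m}(0)=0$.
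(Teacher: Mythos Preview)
Your proposal is correct and follows essentially the same route as the paper. The paper also extracts a weak-$*$ accumulation point $\nu$, passes the pressure identity to the limit using upper semi-continuity of entropy, and obtains $h_\nu+\int\phi\,d\nu\ge\CP_\phi(\S)-\alpha+\Psiout(\alpha)L$, concluding that $L=0$ would contradict the variational principle since $-\alpha>0$; the only cosmetic difference is that the paper handles the cross term by invoking the already established convergence $\Psi_{\CC_m}(\alpha)\to\Psiout(\alpha)$ (a finite value), whereas you argue boundedness directly via Remark~\ref{rem1-ineg-psi}. One small caveat: your lower bound $\Psi_{\CB_1}(\alpha)$ tacitly assumes $\CB_1\neq\emptyset$; it is safer to use any $\CB_j$ that is nonempty, or simply cite the finiteness of $\Psiout(\alpha)$ established in Section~\ref{sec-exis-psiA}.
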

\begin{proof}
Let us denote by $\mu_m$ the measure $m_{\CC_m,\alpha}$, and pick any accumulation point $\nu$ of $(\mu_m)$ such that $\mu_m(\CC_m)$ converges (up to the correct subsequence) to $L:=\disp\liminf_{m\rightarrow\pinf}{\mu_m(\CC_m)}$.

Since $\mu_m$ is an equilibrium state we have 
\begin{equation}
h_{\mu_m}+\int \phi\,d\mu_m-\Psi_{\CC_m}(\alpha)\mu_m(\CC_m)=\CP_\phi(\S)-\alpha.
\label{equ-press2}
\end{equation}
By semi-continuity for the metric entropy and the continuity of $\phi$ 
we obtain 
\begin{equation}
\CP_\nu(\phi)=h_\nu+\int\phi d\nu
\geq \CP_\phi(\S)-\alpha+\Psiout(\alpha)L.\label{equ-minopress2}
\end{equation}
Therefore $L\neq0$ otherwise the right hand side would be larger than the topological pressure of $\phi$.
\end{proof}

\begin{lemma}\label{lem-mesbordnul2}
There exists $\ul\alpha<0$ such that for any $\alpha\in(\ul\alpha,0)$ we have
$\displaystyle\lim_{m\to\infty} m_{\CC_m,\alpha}(\CD_m)=0$.
\end{lemma}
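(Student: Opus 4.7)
The plan is to mirror the proof of Lemma \ref{lem-mesbordnul}, with $\CB_m$ replaced by $\CC_m$ and $\Psiin$ by $\Psiout$, carefully adapting the sign changes that come from the fact that $\Psi_{\CC_m}(\alpha)\le 0$ for $\alpha\le 0$. Setting $\mu_m:=m_{\CC_m,\alpha}$ and letting $\nu$ be any accumulation point of $(\mu_m)$, the problem reduces to showing $\nu(\pA)=0$: once this is known, for any $\eps>0$ one chooses $j$ with $\nu(\CD_j)<\eps$ (possible since $\pA=\bigcap_j\CD_j$ and $\BBone_{\CD_j}$ is continuous as a union of cylinders), and then $\CD_m\subset\CD_j$ for $m\ge j$ gives $\limsup_m\mu_m(\CD_m)\le\nu(\CD_j)<\eps$, exactly as at the end of Lemma \ref{lem-mesbordnul}.

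To prove $\nu(\pA)=0$ I would argue by contradiction and decompose $\nu=p\nu_0+q\nu_1$ with $p>0$ and $\nu_0$ supported on the invariant hull of $\pA$, so that by Poincar\'e recurrence $h_{\nu_0}+\int\phi\,d\nu_0\le \wt\CP_\phi(\pA)=\CP_\phi(\S)-\delta$ with $\delta>0$ by hypothesis. The key observation for negative $\alpha$ is that, since $-\Psi_{\CC_m}(\alpha)\ge 0$ and $\mu_m(\CC_m)\le \mu_m(\CC_j)$ for $m\ge j$ (as $\CC_m\subset\CC_j$), the equilibrium identity of Lemma \ref{lem-carac-mus} yields
\[
h_{\mu_m}+\int \phi\, d\mu_m-\Psi_{\CC_m}(\alpha)\mu_m(\CC_j)\geq \CP_\phi(\S)-\alpha.
\]
Passing to $m\to\pinf$ via upper semi-continuity of entropy, continuity of $\BBone_{\CC_j}$, and the (uniform on compact sets) convergence $\Psi_{\CC_m}(\alpha)\to\Psiout(\alpha)$ gives $h_\nu+\int\phi\,d\nu-\Psiout(\alpha)\nu(\CC_j)\ge \CP_\phi(\S)-\alpha$.

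Splitting along $\nu=p\nu_0+q\nu_1$ and bounding the $\nu_0$-piece from above by $p(\CP_\phi(\S)-\delta-\Psiout(\alpha))$ using $\nu_0(\CC_j)\le 1$ and $-\Psiout(\alpha)\ge 0$, a routine rearrangement produces
\[
h_{\nu_1}+\int\phi\, d\nu_1-\Psiout(\alpha)\nu_1(\CC_j)\ge \CP_\phi(\S)-\alpha+\frac{p}{q}\bigl(\delta-\alpha+\Psiout(\alpha)\bigr).
\]
Since $(\Psi_{\CC_j}(\alpha))_j$ is non-increasing for $\alpha<0$ with limit $\Psiout(\alpha)\le 0$, one has $\Psi_{\CC_j}(\alpha)\ge\Psiout(\alpha)$, so replacing $\Psiout$ by $\Psi_{\CC_j}$ on the left can only increase the left-hand side.

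It remains to choose $\ul\alpha<0$ so small that $|\Psiout(\alpha)|<\delta$ on $(\ul\alpha,0)$, which is possible by the continuity of $\Psiout$ at $0$ with $\Psiout(0)=0$; then $\delta-\alpha+\Psiout(\alpha)>0$, and for every such $\alpha$ and every sufficiently large $j$ we obtain
\[
h_{\nu_1}+\int\phi\, d\nu_1-\Psi_{\CC_j}(\alpha)\nu_1(\CC_j)>\CP_\phi(\S)-\alpha,
\]
contradicting Lemma \ref{lem-carac-mus}, since $m_{\CC_j,\alpha}$ is the unique equilibrium state of $\phi-\Psi_{\CC_j}(\alpha)\BBone_{\CC_j}$ with pressure exactly $\CP_\phi(\S)-\alpha$. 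The main obstacle is the sign bookkeeping: the inequalities flip compared with Lemma \ref{lem-mesbordnul} and one has to bound the $\nu_0$-contribution by $-\Psiout(\alpha)$ (not $+\Psiout(\alpha)\nu_0(\CC_j)$), which is precisely what forces $\ul\alpha$ to be restricted in terms of $\delta$ and the behavior of $\Psiout$ near $0$.
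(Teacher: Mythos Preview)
Your overall strategy is the same as the paper's and is sound, but there is a sign error at a crucial step. You write that since $\Psi_{\CC_j}(\alpha)\ge\Psiout(\alpha)$, ``replacing $\Psiout$ by $\Psi_{\CC_j}$ on the left can only increase the left-hand side.'' This is backwards: the left-hand side carries $-\Psiout(\alpha)\nu_1(\CC_j)$, and since $-\Psi_{\CC_j}(\alpha)\le-\Psiout(\alpha)$ (both non-negative), the substitution \emph{decreases} the left-hand side. So from
\[
h_{\nu_1}+\int\phi\,d\nu_1-\Psiout(\alpha)\nu_1(\CC_j)\ge \CP_\phi(\S)-\alpha+\tfrac{p}{q}\bigl(\delta-\alpha+\Psiout(\alpha)\bigr)
\]
you cannot conclude the desired strict inequality with $\Psi_{\CC_j}$ by monotonicity alone.

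The repair is easy and is exactly what the paper does: use the convergence $\Psi_{\CC_j}(\alpha)\to\Psiout(\alpha)$. For any $\eta>0$ and $j$ large one has $-\Psiout(\alpha)\nu_1(\CC_j)\le -\Psi_{\CC_j}(\alpha)\nu_1(\CC_j)+\eta$, hence
\[
h_{\nu_1}+\int\phi\,d\nu_1-\Psi_{\CC_j}(\alpha)\nu_1(\CC_j)\ge \CP_\phi(\S)-\alpha+\tfrac{p}{q}\bigl(\delta-\alpha+\Psiout(\alpha)\bigr)-\eta.
\]
Taking $\eta$ small relative to $\tfrac{p}{q}(\delta-\alpha+\Psiout(\alpha))>0$ (valid on your interval $(\ul\alpha,0)$) then gives the strict inequality and the contradiction with Lemma~\ref{lem-carac-mus}. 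With this correction your argument is essentially the paper's proof; the only cosmetic difference is that the paper bounds the $\nu_0$-term more finely via $\nu_0(\CC_j)\to\nu_0(\ol A)$, whereas your cruder bound $\nu_0(\CC_j)\le1$ already suffices.
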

\begin{proof}
We keep the notation of the preceding lemma. Let $\nu$ be an accumulation point of $(\mu_m)$. We first show that $\nu(\pA)=0$.
By equality \eqref{equ-press2} we get that for any integers $m\ge j$, since $\CC_j\supset \CC_m$ and now $\Psi_{\CC_m}(\alpha)<0$, we have
\[
h_{\mu_m}+\int\phi d\mu_m -\Psi_{\CC_m}(\alpha)\mu_m(\CC_j) \ge \CP_\phi(\Sigma)-\alpha.
\]
Letting $m\to\infty$ gives, since the entropy is semi-continuous and $\BBone_{\CC_j}$ is continuous, that
\[
h_\nu+\int \phi d\nu -\Psiout(\alpha)\nu(\CC_j)\ge\CP_\phi(\Sigma)-\alpha.
\]
Assume for a contradiction that $\nu(\pA)>0$ and decompose $\nu=p\nu_0+q\nu_1$ as in the case $\alpha$ positive.
Let $\delta>0$. By definition of $\Psiout$, for any $j$ sufficiently large we have $-\Psi_{\CC_j}(\alpha)\nu_1(\CC_j)+\delta\ge -\Psiout(\alpha)\nu_1(\CC_j)$.
Since the entropy is affine, we get
\[
p\left(h_{\nu_0}+\int\phi d\nu_0-\Psiout(\alpha)\nu_0(\CC_j) \right)+q\left(h_{\nu_1}+\int\phi d\nu_1-\Psi_{\CC_j}(\alpha)\nu_1(\CC_j)+\delta\right)\ge \CP_\phi(\Sigma)-\alpha.
\]
This together with \eqref{equ2-bordpetitepression} gives
\[
q\left(h_{\nu_1}+\int \phi d\nu_1-\Psi_{\CC_j}(\alpha)\nu_1(\CC_j)+\delta\right)
\ge
\CP_\phi(\Sigma)-\alpha-p\left( \CP_\phi(\Sigma)-\ol\alpha-\Psiout(\alpha)\nu_0(C_j)  \right).
\]
Since the pressure $\CP_{\nu_1}(\phi-\Psi_{\CC_j}(\alpha)\BBone_{\CC_j})\le P_\phi(\Sigma)-\alpha$ this implies that 
\[
q (P_\phi(\Sigma)-\alpha+\delta)\ge \CP_\phi(\Sigma)-\alpha-p\left( \CP_\phi(\Sigma)-\ol\alpha-\Psiout(\alpha)\nu_0(C_j)  \right).
\]
By outer regularity of the measure $\nu_0$ we have $\nu_0(C_j)\to \nu_0(\ol A)\le \max(\ol A)$ as $j\to\infty$. Since $\delta$ is arbitrary this gives
$p(-\ol\alpha-\Psiout(\alpha)+\alpha)\ge0$, which is contradictory if $p>0$
 and $\alpha$ is small enough, since the function $\alpha\mapsto \alpha-\Psiout(\alpha)\max(\ol A)$ is continuous and vanishes for $\alpha=0$. Thus there exists $\ul\alpha<0$ such that if $\alpha\in]\ul\alpha,0[$ we have $\nu(\pA)=0$.

The conclusion of the lemma follows as in the positive case.
\end{proof}

\begin{remark}\label{rem-alpha2}
We remark that under the assumption in statement~\ref{point1} of the theorem, we always have $\nu(A)=0$ for the measure $\nu$ constructed in Lemma~\ref{lem-mesbordnul2}, 
therefore $\ul\alpha=-\infty$.
\end{remark}
%%%%%%%%%%%%%%%%%%%%%%%%%%%%%%%%%%%%%%%%%%%%%%%%%%%%%%%%%%%%%%%%%%%%%%%%%%%%%%%%%%%%%%%%

\section{A dynamical proof of the coincidence of inner and outer approximation in the case of totally negligible boundary}\label{sec-proof-th1}

In this section we give an alternative and somewhat more direct proof of the statement~\ref{point1} in our theorem.
By Proposition~\ref{pro-inegalout} and Proposition~\ref{pro-psihalf} it suffices to show the equality $\Psiin=\Psiout$ on the interval $(-\infty,0)$ for the set $A$ and its complement $A^c$. The hypotheses on the boundary is completely symmetric if we replace $A$ by $A^c$, so it is sufficient to prove the equality on the interval $(-\infty,0)$ for the set $A$ only. However, we also prove that the equality holds some interval $(-\infty,\alphain)$ for some $\alphain>0$.
This in turn not only implies that the rate function $\Phi_A$ exists on the whole interval $[0,+\infty)$
(except at discontinuity points), but also shows that the formula \eqref{equ-psidonnephi} is satisfied on some interval $[0,\ol u)$ for some $\ol u>\frac{1}{\mu_\phi(A)}$.

%%%%%%%%%%%%%%%%%%%%%%%%%%%%%%%%%
\subsection{Infinite rate function for return times near the boundary}

Recall that  $\CD_m=\CC_m\setminus\CB_m$ is the $m$-cylindrical neighborhood of the boundary $\pA$.
For convenience, and for general computations, we remove the subscript "$m$" and just write $\CD$. Our aim is to show that, the probability that the successive return times into $\CD_m$ are small, is extremely small. We first prove a key lemma. 

\begin{lemma}\label{lem-lim-mesmax-D}
With the assumption on $\pA$, $\lim_{m\rightarrow+\8}\max(\CD_m)=0$.
\end{lemma}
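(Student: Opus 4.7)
The plan is to prove this by contradiction using weak-$*$ compactness of the space of $\sigma$-invariant probability measures on the compact symbolic space $\Sigma$, combined with the fact that the sets $\CD_m$ are clopen (finite unions of cylinders) and decrease to $\pA$.

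First I would assume toward a contradiction that $\max(\CD_m)$ does not tend to zero. Since $\CD_m$ is decreasing in $m$, the sequence $\max(\CD_m)$ is non-increasing, so there exists $\eps>0$ with $\max(\CD_m)\geq \eps$ for every $m$. For each $m$ I would pick an invariant probability measure $\mu_m$ with $\mu_m(\CD_m)\geq \eps/2$ (the sup need not be attained, but one can always get arbitrarily close). The space of $\sigma$-invariant probabilities on $\Sigma$ is weak-$*$ compact, so after extracting a subsequence I may assume $\mu_m\to\nu$ weakly, with $\nu$ being a $\sigma$-invariant probability measure.

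Next I would exploit the key structural feature: each $\CD_j$ is a finite union of $(-j,j)$-cylinders, hence is both open and closed, so $\BBone_{\CD_j}$ is continuous. Fix $j\geq 1$. For all $m\geq j$ the nesting $\CD_m\subset\CD_j$ gives $\mu_m(\CD_j)\geq \mu_m(\CD_m)\geq \eps/2$. Weak-$*$ convergence applied to the continuous function $\BBone_{\CD_j}$ yields
\[
\nu(\CD_j)=\lim_{m\to\infty}\mu_m(\CD_j)\geq \eps/2.
\]
Since this holds for every $j$ and the $\CD_j$ form a decreasing sequence of compact sets with $\bigcap_j \CD_j=\pA$, continuity of the probability measure $\nu$ from above gives $\nu(\pA)\geq \eps/2>0$.

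This contradicts the hypothesis that $\mu(\pA)=0$ for every $\sigma$-invariant probability measure $\mu$, and so completes the proof. The only step that requires any care is the initial choice of $\mu_m$ (since the sup defining $\max(\CD_m)$ may not be realized) and, secondarily, verifying that the weak-$*$ limit $\nu$ is indeed an invariant probability measure — both are standard, so I do not expect a genuine obstacle. The heart of the argument is really the combination of the cylindrical (hence clopen) nature of the approximating sets with the compactness of $\pA$ as the monotone intersection.
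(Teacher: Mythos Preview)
Your proof is correct and follows essentially the same route as the paper: pick near-optimizing invariant measures $\mu_m$ for $\CD_m$, pass to a weak-$*$ accumulation point $\nu$, use that each $\BBone_{\CD_j}$ is continuous to get a uniform lower bound on $\nu(\CD_j)$, and then let $j\to\infty$ along the decreasing clopen sets $\CD_j\downarrow\pA$ to conclude $\nu(\pA)>0$. The only cosmetic difference is that the paper phrases it as a direct inequality $\rho\le\nu(\pA)=0$ rather than by contradiction, and takes $\mu_m(\CD_m)\ge\max(\CD_m)-\frac1m$ instead of your $\eps/2$ threshold.
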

\begin{proof}
Since $\CD_m$ is decreasing the limit $\rho:=\lim_{m\rightarrow+\8}\max(\CD_m)$ exists.
For any $m$ there exists some probability $\mu_m$ such that 
$$\mu_m(\CD_m)\geq \max(\CD_m)-\frac1{m}.$$
Let us pick any accumulation point $\mu$ for the sequence of probabilities $(\mu_m)$. Recall that the map $\BBone_{\CD_m}$ is continuous. Let us pick some integer $m$. For simplicity we write converging sequences instead of converging subsequences.
\[
\mu(\CD_m)=\lim_\ninf\mu_n(\CD_m)
\geq \liminf_\ninf \mu_n(\CD_n)
\geq \lim_\ninf\max(\CD_n)-\frac1{n}=\rho.
\]
By outer regularity of the measure $\mu$ this yields that $\rho\le\lim\mu(\CD_m)= \mu(\pA)=0$.
\end{proof}

\begin{proposition}\label{pro-phid-infini}
For every $v>0$, there exists some $M=M(v)$ such that for every $m\geq M$, $\Phi_{\CD_m}(v)=-\8$.
\end{proposition}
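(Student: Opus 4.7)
The plan is to exploit the cumulant generating function $\Psi_{\CD_m}$, which is defined on $(-\infty,\alpha(\CD_m))$ since $\CD_m$ is a finite union of $m$-cylinders, to which the analysis of Section~\ref{sec-CB-CC} applies. Recall from Lemma~\ref{lem-carac-mus} that $\Psi_{\CD_m}(\alpha)$ is characterized implicitly by $\CP(\phi-\Psi_{\CD_m}(\alpha)\BBone_{\CD_m})=\CP_\phi(\S)-\alpha$. First, for every $\alpha<0$ the exponential Chebyshev--Markov inequality yields
\[
\mu_\phi\{r^n_{\CD_m}\le nv\}\le e^{-\alpha nv}\int e^{\alpha r^n_{\CD_m}}\,d\mu_\phi,
\]
and taking $\frac{1}{n}\log$ followed by $n\to\infty$ gives
$\limsup_n \frac{1}{n}\log\mu_\phi\{r^n_{\CD_m}\le nv\}\le \Psi_{\CD_m}(\alpha)-\alpha v$. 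The task thus reduces to showing that, once $m$ is large enough, this bound can be driven to $-\infty$ by sending $\alpha\to-\infty$.

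The heart of the argument will be to determine the asymptotic slope of $\Psi_{\CD_m}$ at $-\infty$. Since $\CD_m$ is clopen, $\BBone_{\CD_m}$ is continuous, so $\max(\CD_m)$ is attained by some $\s$-invariant measure $\nu_*$ by weak-$*$ compactness. For $t<0$, testing the variational formula against $\nu_*$ for the lower bound and estimating $\nu(\CD_m)\le\max(\CD_m)$ inside the supremum for the upper bound gives
\[
h_{\nu_*}+\int\phi\,d\nu_*+|t|\max(\CD_m)\;\le\; \CP(\phi-t\BBone_{\CD_m})\;\le\; \CP_\phi(\S)+|t|\max(\CD_m).
\]
Consequently $\CP(\phi-t\BBone_{\CD_m})/|t|\to\max(\CD_m)$ as $t\to-\infty$, and substituting into the implicit characterization of $\Psi_{\CD_m}$ yields the key asymptotic
\[
\lim_{\alpha\to-\infty}\frac{\Psi_{\CD_m}(\alpha)}{\alpha}=\frac{1}{\max(\CD_m)}.
\]

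To conclude, invoke Lemma~\ref{lem-lim-mesmax-D}: since $\max(\CD_m)\to 0$, for the given $v>0$ one may pick $M=M(v)$ with $\max(\CD_m)<1/v$ whenever $m\ge M$. For such $m$ the coefficient $1/\max(\CD_m)-v$ is strictly positive, hence $\Psi_{\CD_m}(\alpha)-\alpha v\sim \alpha\bigl(1/\max(\CD_m)-v\bigr)\to-\infty$ as $\alpha\to-\infty$. This forces $\Phi_{\CD_m}(v)=-\infty$, as desired. The only nontrivial step is the asymptotic slope computation, which reduces via Lemma~\ref{lem-carac-mus} to the straightforward pressure estimates above; the remaining ingredients (Markov's inequality and the vanishing of $\max(\CD_m)$ from Lemma~\ref{lem-lim-mesmax-D}) are already at hand.
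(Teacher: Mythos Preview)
Your proof is correct and follows essentially the same route as the paper: bound $\Phi_{\CD_m}(v)$ from above by $\inf_{\alpha<0}\{\Psi_{\CD_m}(\alpha)-\alpha v\}$, identify the asymptotic slope of $\Psi_{\CD_m}$ at $-\infty$ as $1/\max(\CD_m)$, and use Lemma~\ref{lem-lim-mesmax-D} to force this slope above $v$ for large $m$. The only noteworthy difference is that you supply an explicit derivation of the slope via the implicit pressure characterization from Lemma~\ref{lem-carac-mus} and the variational bounds $h_{\nu_*}+\int\phi\,d\nu_*+|t|\max(\CD_m)\le \CP(\phi-t\BBone_{\CD_m})\le \CP_\phi(\S)+|t|\max(\CD_m)$, whereas the paper simply asserts that the slope at $-\infty$ equals $1/\max(\CD_m)$ and invokes the full LDP for cylinders (Remark~\ref{rem-ldprectangle}) in place of your direct Markov inequality; your version is thus slightly more self-contained.
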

\begin{proof}
Let $v>0$. By Lemma~\ref{lem-lim-mesmax-D} we always can consider $m$ large enough such that 
$$\frac1{\mu_\phi(\CD)}>v.$$
Note that $\CD$ is a union of $(-m,m)$-cylinders. We thus can use the large deviations principle for $(r^k_\CD)$ (see Remark~\ref{rem-ldprectangle}) which gives
\begin{eqnarray}
\Phi_\CD(v)=\lim_{n\to \infty}\frac1{n}\log\mu_\phi\left\{\frac{r_\CD^{n}}{n}\leq
v\right\}&=&\inf_{\wt\alpha<\alpha'}\left\{-v\wt\alpha+\Psi_\CD(\wt\alpha)\right\},\label{equ1-esti-retour-D}
\end{eqnarray}
where $\alpha'=\alpha(\CD)>0$ (it thus depends on $m$). 

\begin{figure}
\begin{center}
\unitlength 0.6 mm
\begin{picture}(122.5,95)(0,0)
\linethickness{0.1mm}
\put(80,0){\line(0,1){90}}
\put(80,90){\vector(0,1){0.12}}
\linethickness{0.1mm}
\put(0,60){\line(1,0){120}}
\put(120,60){\vector(1,0){0.12}}
\linethickness{0.3mm}
\multiput(22.5,10)(1.51,1.32){65}{\multiput(0,0)(0.15,0.13){5}{\line(1,0){0.15}}}
\linethickness{0.3mm}
\qbezier(110,95)(102.19,79.35)(97.38,71.53)
\qbezier(97.38,71.53)(92.56,63.71)(90,62.5)
\qbezier(90,62.5)(87.49,61.21)(78.47,59.41)
\qbezier(78.47,59.41)(69.45,57.6)(52.5,55)
\qbezier(52.5,55)(35.62,52.41)(21.78,50)
\qbezier(21.78,50)(7.95,47.59)(-5,45)
\put(100,75){\makebox(0,0)[cc]{}}

\put(124.5,60.5){\makebox(0,0)[cc]{$\wt\alpha$}}

\put(80.5,94.5){\makebox(0,0)[cc]{$\Psi_\CD(\wt\alpha)$}}

\put(117.5,57.5){\makebox(0,0)[cc]{}}

\put(45,20.5){\makebox(0,0)[cc]{$\wt\alpha v$}}

\linethickness{0.3mm}
\put(25,20){\line(0,1){25}}
\put(25,45){\vector(0,1){0.12}}
\put(25,20){\vector(0,-1){0.12}}
\put(35,32.5){\makebox(0,0)[cc]{$\Phi_\CD(v)$}}
\end{picture}
\end{center}
\caption{The rate function $\Phi_\CD(v)$ is the maximal distance between $\wt\alpha v$ and $\Psi_\CD(\wt\alpha)$ on the negative axis.}\label{fig-phid}
\end{figure}
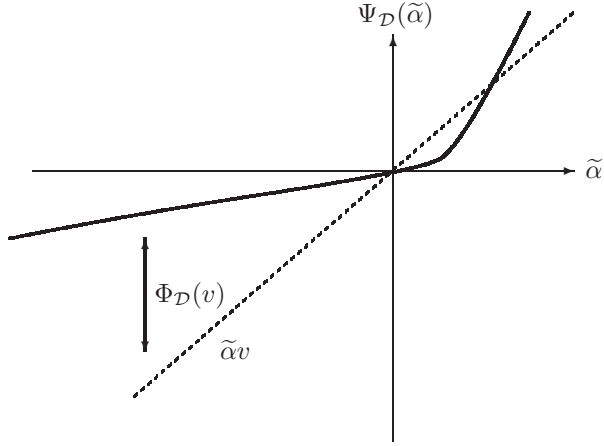

We emphasize that the slope of $\wt\alpha\mapsto \Psi_\CD(\wt\alpha)$ as $\wt\alpha$ goes to $-\8$ is $\frac1{\max(\CD)}$.
Lemma~\ref{lem-lim-mesmax-D} yields the existence of some $M=M(v)$ such that for every $m\geq M$, $\frac1{\max(\CD_m)}<v$. 
This implies by~\eqref{equ1-esti-retour-D} that $\Phi_{\CD_m}(v)\le \lim_{\wt\alpha\to-\8}\wt\alpha\left(v-\frac1{\max(D)}\right)=-\8$ (See Figure~\ref{fig-phid}).
\end{proof}

%%%%%%%%%%%%%%%%%%%%%%%%%%%%%%%
\subsection{Coincidence for positive values of $\alpha$}

Fix some $\alpha\in(0,\alphain)$ and $\delta$ such that $\alpha+\delta<\alphain$. For sufficiently large $m$, all the $\Psi_{\CB_m}$ are defined on $[0,\alpha+\delta]$ and equicontinuous. then choose a uniform $\tau$ in Proposition~\ref{prop-concen-mass} such that the mass concentration holds, namely
\begin{equation}\label{equ-mass-concen}
\Psi_{\CB_m}(\alpha)=\lim\frac{1}{n}\log \int_{r_{\CB_m}^n\le n\tau} e^{\alpha r_{\CB_m}^n}d\mu_\phi
\end{equation}
for all sufficiently large $m$.

Let us pick some fixed positive $\eps$. We have
\begin{equation}\label{equ1-majo-retou-eps}
\int_{\ r^n_B\leq n\tau} e^{\alpha r^n_\CB}\,d\mu_\phi\leq 
\int_{r^n_B\leq r^{n(1+\eps)}_\CC} e^{\alpha r^{n (1+\eps)}_\CC}\,d\mu_\phi
+\int_{r^{n(1+\eps)}_\CC<r^n_B\leq n\tau} e^{\alpha r^n_\CB}\,d\mu_\phi.
\end{equation}

The first term in the right hand side of this equation is simply bounded by
\begin{equation}\label{equ-rBpetit}
\int e^{\alpha r^{n (1+\eps)}_\CC}\,d\mu_\phi \le e^{n(1+2\eps)\Psi_{\CC}(\alpha)}
\end{equation}
provided $n$ is sufficiently large.

We turn to the second term. 
The condition  $r^{n(1+\eps)}_\CC<r^n_B\leq n \tau$ implies that $r_\CD^{n\eps}\leq n\tau$, hence we get
\begin{equation}
\int_{r^{n (1+\eps)}_\CC<r^n_B\leq n \tau} e^{\alpha r^n_\CB}\,d\mu_\phi
\leq e^{\alpha n \tau}\mu_\phi\left(r^{n \eps}_{\CD}\le n\tau\right)
= e^{\alpha n \tau}\mu_\phi(r^{n\eps}_\CD\leq (n \eps) \frac{\tau}{\eps}).\label{equ2-rab-majo-psib}
\end{equation}

By Proposition~\ref{pro-phid-infini}, if we consider $m\geq M(\frac\tau\eps)$ for some fixed $\eps$, for $n$ large enough we get
$$\mu_\phi(r^{n \eps}_\CD\leq n \eps\frac\tau\eps)\leq e^{-2\alpha \tau n},$$
 Therefore,  (\ref{equ2-rab-majo-psib}) gives for $n$ sufficiently large
$$\int_{r^{n (1+\eps)}_\CC<r^n_B\leq n \tau} e^{\alpha r^n_\CB}\,d\mu_\phi\leq 1.$$
Recall that $\Psi_\CC(\alpha)\geq 0$ for $\alpha\geq 0$. 
Then, \eqref{equ1-majo-retou-eps} together with \eqref{equ-rBpetit} and \eqref{equ-mass-concen} yield that
\[
\Psi_{\CB}(\alpha) \le (1+2\eps) \Psi_{\CC}(\alpha)
\]  
It follows from Proposition~\ref{prop-inegal-gd} that
\begin{equation}\label{equ-ine-psia}
\Psiin(\alpha)\leq (1+2\eps)\Psiout(\alpha).
\end{equation}
Letting $\eps$ go to $0$ we get that $\Psiin(\alpha)=\Psiout(\alpha)$.

%%%%%%%%%%%%%%%%%%%%%%
\subsection{Coincidence for negative values of $\alpha$}

We now do the proof for a fixed $\alpha<0$. Here again we omit the subscript ``$m$'' when it is not necessary. We also pick some positive $\eps$. Then, we have: 

\begin{eqnarray}\int e^{\alpha r^n_\CB}\,d\mu_\phi
&\geq& \int_{r^n_B\leq r^{n (1+\eps)}_\CC} e^{\alpha r^{n (1+\eps)}_\CC}\,d\mu_\phi\nonumber\\
&\geq & \int e^{\alpha r^{n (1+\eps)}_\CC}\,d\mu_\phi-\int_{r^n_B> r^{n (1+\eps)}_\CC} e^{\alpha r^{n (1+\eps)}_\CC}\,d\mu_\phi.
\label{equ0-mino-gd-neg}
\end{eqnarray}
Let us pick some positive real $\wt\tau$ which will be chosen latter. We have 
\begin{eqnarray}
\int_{ r^n_B> r^{n (1+\eps)}_\CC} e^{\alpha r^{n (1+\eps)}_\CC}\,d\mu_\phi&\leq & \mu_\phi\left({r^n_\CB>r^{n (1+\eps)}_\CC>n (1+\eps)\wt\tau}\right)+\,\nonumber\\
&&\hskip 1cm +\mu_\phi\left({r^n_B> r^{n (1+\eps)}_\CC\cap n (1+\eps)\wt\tau\geq r^{n (1+\eps)}_\CC}\right)\nonumber\\
&\leq & \mu_\phi\left( r^{n (1+\eps)}_\CC>n (1+\eps)\wt\tau\right)
+\mu_\phi\left(r_\CD^{n\eps} \le n
 (1+\eps)\wt\tau \right).\nonumber\\
&&\label{equ1-mino-gd-neg}
\end{eqnarray}
The large deviations principle for $r^k_\CC$ means 
\begin{eqnarray}
\Phi_\CC(\wt\tau):=\lim_{n\to \infty}\frac1n\log\mu_\phi\left\{\frac{r_\CC^{n}}{n}\geq
\wt\tau\right\}&=& \inf_{\wt\alpha<\alpha'}\left\{-
\wt\tau\wt\alpha+\Psi_\CC(\wt\alpha)\right\}
\label{equ2-esti-retour-C}
\end{eqnarray}
for some $\alpha'>\alphain$. 
Fix some $j$ and some $\wt\alpha\in(0,\alpha(\CB_j))$. Choose then $\wt\tau$ such that
\[
-\wt\tau \wt\alpha+\Psi_{\CB_j}(\wt\alpha)<2 \Psi_{\CB_j}(\alpha)<0.
\]
Recall that on $\R_+$ all the $\Psi_\CC$ are lower than all the $\Psi_\CB$, and the converse holds on $\R_-$. 
Therefore we get for every $m$ that 
\begin{equation}\label{equ-esti-wttau}
-\wt\tau \wt\alpha+\Psi_{\CC_m}(\wt\alpha)<2 \Psi_{\CC_m}(\alpha)<0.
\end{equation}
For $n$ large enough, (\ref{equ2-esti-retour-C}) and (\ref{equ-esti-wttau}) yield 
\begin{equation}\label{equ3-mino-gd-neg}
\mu_\phi\left(r^{n (1+\eps)}_\CC>n (1+\eps)\wt\tau\right)\leq e^{n (1+\eps)(\Phi_\CC(\wt\tau)+\eps)}\leq e^{n (1+\eps)(2\Psi_\CC(\alpha)+\eps)}.
\end{equation}
Following Proposition~\ref{pro-phid-infini} we get 
\begin{equation}\label{equ4-mino-gd-neg}
\mu_\phi\left(r_\CD^{n\eps} \le n (1+\eps)\wt\tau \right)\leq e^{2n (1+\eps) \Psi_\CC(\alpha)}
\end{equation}
for every large enough $m$ and for every large enough $n$. Therefore (\ref{equ0-mino-gd-neg}),  (\ref{equ3-mino-gd-neg}),  and (\ref{equ4-mino-gd-neg}) yield for every large enough $m$ and for every large enough $n$:
$$\int e^{\alpha r^n_{\CB_m}}\,d\mu_\phi\geq e^{n (1+\eps) (\Psi_{\CC_m}(\alpha)-\eps)}-e^{2n (1+\eps) \Psi_{\CC_m}(\alpha)}-e^{n (1+\eps)(2\Psi_{\CC_m}(\alpha)+\eps)},$$
For fixed $m$, letting $n$ go to $+\8$ and using Proposition \ref{prop-inegal-gd} with $\alpha<0$ we get for every $\eps>0$
\begin{equation}\label{equ2-ine-psia}
\Psiin(\alpha)\geq (1+\eps)(\Psiout(\alpha)-\eps).
\end{equation}
When $\eps$ goes to 0, we get that $\Psiin(\alpha)=\Psiout(\alpha)$. 
%%%%%%%%%%%%%%%%%%%%%%%%%%%%%%%%

\bibliographystyle{amsplain}

\end{document}